\newtheorem{thm}{Theorem}[section]
\newtheorem{prp}[thm]{Proposition}
\newtheorem{lem}[thm]{Lemma}
\newtheorem{cor}[thm]{Corollary}
\newtheorem{rem}[thm]{Remark}
\newtheorem{ques}[thm]{Question}
\def\tr{\mathop{\rm tr }\nolimits}
\title{A note on graphs with exactly two main eigenvalues}
\author[S. Hayat]{ Sakander Hayat$^\diamondsuit$ }
\thanks{$^\diamondsuit$S.H is supported by a CAS-TWAS president's fellowship at USTC, China.}
\author[J. H. Koolen]{ Jack H. Koolen$^\spadesuit$ }
\thanks{$^\spadesuit$J.H.K. is partially supported by the National Natural Science Foundation of China (No. 11471009).}
\author[F. Liu]{Fenjin Liu$^\clubsuit$}
\thanks{$^\clubsuit$F.L is supported by NSFC grant No. 11401044 and Postdoctoral Science Foundation of China No. 2014M560754.}
\author[Z. Qiao]{Zhi Qiao}
\address{School of Mathematical Sciences,
University of Science and Technology of China,
Hefei, Anhui, 230026, P.R. China}
\email{sakander@mail.ustc.edu.cn}
\address{Wen-Tsun Wu Key Laboratory of CAS and School of Mathematical Sciences,
University of Science and Technology of China,
Hefei, Anhui, 230026, P.R. China}
\email{koolen@ustc.edu.cn}
\address{School of Science, Chang'an University, Xi'an, Shaanxi 710064, P.R. China}
\email{fenjinliu@yahoo.com}
\address{College of Mathematics and Software Science,
Sichuan Normal University, Chengdu, 610068, Sichuan, P.R. China}
\email{zhiqiao@sicnu.edu.cn}
\begin{document}

\subjclass[2010]{05C50}

\keywords{Main eigenvalues, Harmonic graphs, Strong graphs, Regular two-graphs, Seidel switching.}


\begin{abstract}
In this note, we consider connected graphs with exactly two main eigenvalues.
We will give several constructions for them, and as a consequence we show a family
of those graphs with an unbounded number of distinct valencies.
\end{abstract}

\maketitle

\section{Introduction}\label{sec1}

Let $G$ be a simple graph with vertex set $V$
and $(0,1)$-adjacency matrix $A$. By an \emph{eigenvalue} (resp. \emph{eigenvector}) of a graph, we mean an
eigenvalue (resp. eigenvector) of its adjacency matrix $A$. The largest eigenvalue of $G$ is
called the \emph{spectral radius} of $G$ and denoted by $\rho$.
We denote the all-ones matrix, the identity
matrix and the all-one vector by $J$, $I$ and $\mathbf{j}$ respectively.
An eigenvalue $\mu$ of $A$ is  said to be a \emph{main eigenvalue} of $A$,
if there exists an eigenvector of $A$ corresponding to $\mu$
not orthogonal to $\mathbf{j}$.
Also, $\mu$ is called \emph{non-main}, if for any eigenvector $\mathbf{x}$ of
$A$ corresponding to $\mu$, $\mathbf{x}$ is orthogonal to $\mathbf{j}$.
Note that, for a connected graph $G$, the spectral radius $\rho$ is always
a main eigenvalue by the Perron-Frobenius Theorem
(See \cite[Theorem 8.8.1]{gr01}). We refer the reader to the survey on
main eigenvalues of graphs by Rowlinson \cite{R07}.\\

A long-standing problem posed by Cvetkovi\'{c} \cite{C78} is
to characterize graphs with exactly $k~(k\geq2)$ main eigenvalues.
Note that a graph $G$ has exactly one main eigenvalue if and only if
$G$ is regular \cite{C78}. Hagos \cite{H02} gave a characterization
of graphs with exactly $k$ main eigenvalues. He showed:
\begin{thm}\cite[Theorem 2.1]{H02}\label{thm-hagos}
Let $G$ be a graph and $k\geq1$ be maximal such that
$\mathbf{j},~A\mathbf{j},\ldots,A^{k-1}\mathbf{j}$ are linearly independent.
Then $G$ has exactly $k$ main eigenvalues.
\end{thm}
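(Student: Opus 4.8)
\medskip
\noindent\textbf{Proof proposal.} The plan is to study the cyclic subspace $W=\operatorname{span}\{\mathbf{j},A\mathbf{j},A^{2}\mathbf{j},\dots\}$ and to compute its dimension in two different ways: once in terms of $k$, and once in terms of the number of main eigenvalues. For the first computation, let $k$ be as in the statement. If $k=|V|$, then $\mathbf{j},A\mathbf{j},\dots,A^{k-1}\mathbf{j}$ already span $\mathbb{R}^{|V|}$. If $k<|V|$, then by maximality $\mathbf{j},A\mathbf{j},\dots,A^{k}\mathbf{j}$ are linearly dependent, and since the first $k$ of these are independent we may write $A^{k}\mathbf{j}=\sum_{i=0}^{k-1}c_{i}A^{i}\mathbf{j}$; multiplying this relation successively by $A$ shows by induction that $A^{t}\mathbf{j}\in\operatorname{span}\{\mathbf{j},A\mathbf{j},\dots,A^{k-1}\mathbf{j}\}$ for every $t\ge 0$. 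In either case $\dim W=k$.

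For the second computation I would invoke the spectral theorem for the real symmetric matrix $A$: $\mathbb{R}^{|V|}$ is the orthogonal direct sum of the eigenspaces $E_{\mu}$ of $A$. Let $\mu_{1},\dots,\mu_{r}$ be the distinct main eigenvalues of $G$ and let $\mathbf{z}_{i}$ be the orthogonal projection of $\mathbf{j}$ onto $E_{\mu_{i}}$. By the definition of a non-main eigenvalue, $E_{\mu}\perp\mathbf{j}$ for every non-main $\mu$, so the corresponding components of $\mathbf{j}$ vanish and $\mathbf{j}=\mathbf{z}_{1}+\dots+\mathbf{z}_{r}$; moreover each $\mathbf{z}_{i}\neq 0$, precisely because $\mu_{i}$ is main. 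Applying $A^{t}$ gives $A^{t}\mathbf{j}=\sum_{i=1}^{r}\mu_{i}^{\,t}\mathbf{z}_{i}$ for all $t\ge 0$, hence $W\subseteq\operatorname{span}\{\mathbf{z}_{1},\dots,\mathbf{z}_{r}\}$ and $\dim W\le r$.

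Finally I would prove the matching lower bound $\dim W\ge r$ by a Vandermonde argument. The vectors $\mathbf{z}_{1},\dots,\mathbf{z}_{r}$ are nonzero and pairwise orthogonal, hence linearly independent. If $\sum_{t=0}^{r-1}c_{t}A^{t}\mathbf{j}=0$, then substituting the formula above yields $\sum_{i=1}^{r}\bigl(\sum_{t=0}^{r-1}c_{t}\mu_{i}^{\,t}\bigr)\mathbf{z}_{i}=0$, so $\sum_{t=0}^{r-1}c_{t}\mu_{i}^{\,t}=0$ for each $i$; since $\mu_{1},\dots,\mu_{r}$ are \emph{distinct}, the associated Vandermonde matrix is invertible and forces $c_{0}=\dots=c_{r-1}=0$. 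Thus $\mathbf{j},A\mathbf{j},\dots,A^{r-1}\mathbf{j}$ are linearly independent and $\dim W\ge r$. Combining the three steps gives $k=\dim W=r$, i.e. $G$ has exactly $k$ main eigenvalues.

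The argument is essentially routine linear algebra, and I do not expect a genuine obstacle. The two points that need a little care are the maximality step, which pins $\dim W$ down to exactly $k$ rather than merely $\ge k$, and the correct translation of the definition of a non-main eigenvalue into the vanishing of the corresponding orthogonal projection of $\mathbf{j}$; both become transparent once the spectral theorem for the symmetric matrix $A$ is used.
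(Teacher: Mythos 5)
Your proof is correct and complete: the three-step computation of $\dim W$ for the Krylov subspace $W=\operatorname{span}\{\mathbf{j},A\mathbf{j},A^{2}\mathbf{j},\dots\}$ (maximality gives $\dim W=k$, the spectral decomposition of $\mathbf{j}$ gives $\dim W\le r$, and the Vandermonde argument gives $\dim W\ge r$) is exactly the standard argument for Hagos's theorem. Note that the paper itself states this result as a citation to \cite[Theorem 2.1]{H02} and gives no proof, so there is nothing internal to compare against; your write-up supplies the missing details correctly, including the two delicate points you flag (pinning $\dim W$ to exactly $k$, and the equivalence between ``$\mu$ is non-main'' and the vanishing of the projection of $\mathbf{j}$ onto $E_{\mu}$).
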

Now we turn to the case where $G$ has exactly two main eigenvalues.
Theorem \ref{thm-hagos} states that $\mathbf{j}$ and $A\mathbf{j}$
are linearly independent and that $\mathbf{j},~A\mathbf{j}$ and $A^{2}\mathbf{j}$
are linearly dependent. If we denote $\mathbf{d}:=A\mathbf{j}$, then this
means that there are real numbers $\alpha,~\beta$ such that
\begin{equation*}
A\mathbf{d}=\alpha\mathbf{d}+\beta\mathbf{j},
\end{equation*}
and $G$ is not regular.
Such a graph $G$ is called 2-\emph{walk $(\alpha, \beta)$-linear} in \cite{HT06}.
Hagos \cite{H02} showed the following relation.
\begin{prp}\cite[Corollary 2.5]{H02}\label{hagos-cor}
Let $G$ be a connected \emph{2}-walk $(\alpha, \beta)$-linear graph.
Then the two main eigenvalues of $G$ are $\mu_{0}$ and $\mu_{1}$, where
\begin{equation*}
\mu_{0},\mu_{1}=\frac{\alpha\pm\sqrt{\alpha^{2}+4\beta}}{2}.
\end{equation*}
\end{prp}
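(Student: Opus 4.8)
The plan is to combine the spectral decomposition of the all-ones vector with the defining relation $A\mathbf{d}=\alpha\mathbf{d}+\beta\mathbf{j}$, i.e. $A^{2}\mathbf{j}=\alpha A\mathbf{j}+\beta\mathbf{j}$. First I would invoke Theorem \ref{thm-hagos}: since $G$ is $2$-walk $(\alpha,\beta)$-linear it is connected and not regular with $\mathbf{j},A\mathbf{j}$ linearly independent but $\mathbf{j},A\mathbf{j},A^{2}\mathbf{j}$ dependent, so $G$ has exactly two main eigenvalues; call them $\mu_{0}$ and $\mu_{1}$, necessarily distinct. Because $A$ is symmetric, I write the orthogonal eigenspace decomposition $\mathbf{j}=\mathbf{x}_{0}+\mathbf{x}_{1}$ with $A\mathbf{x}_{i}=\mu_{i}\mathbf{x}_{i}$: only the components of $\mathbf{j}$ lying in eigenspaces of main eigenvalues can be nonzero, and here both $\mathbf{x}_{0},\mathbf{x}_{1}$ are nonzero precisely because there are two main eigenvalues.

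Next I would apply $A$ to this decomposition twice, getting $A\mathbf{j}=\mu_{0}\mathbf{x}_{0}+\mu_{1}\mathbf{x}_{1}$ and $A^{2}\mathbf{j}=\mu_{0}^{2}\mathbf{x}_{0}+\mu_{1}^{2}\mathbf{x}_{1}$, and substitute into $A^{2}\mathbf{j}=\alpha A\mathbf{j}+\beta\mathbf{j}$ to obtain
\[
(\mu_{0}^{2}-\alpha\mu_{0}-\beta)\,\mathbf{x}_{0}+(\mu_{1}^{2}-\alpha\mu_{1}-\beta)\,\mathbf{x}_{1}=\mathbf{0}.
\]
Since $\mathbf{x}_{0},\mathbf{x}_{1}$ are nonzero eigenvectors for distinct eigenvalues they are linearly independent, so each scalar coefficient vanishes: $\mu_{i}^{2}=\alpha\mu_{i}+\beta$ for $i=0,1$. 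Thus $\mu_{0}$ and $\mu_{1}$ are two distinct roots of $t^{2}-\alpha t-\beta=0$, hence equal to $\dfrac{\alpha\pm\sqrt{\alpha^{2}+4\beta}}{2}$ (the discriminant is positive, as the two roots are real and distinct).

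An essentially equivalent and perhaps cleaner route I could take instead: the relation shows that $U:=\operatorname{span}\{\mathbf{j},A\mathbf{j}\}$ is a $2$-dimensional $A$-invariant subspace, and in the basis $\{\mathbf{j},A\mathbf{j}\}$ the operator $A|_{U}$ has matrix $\left(\begin{smallmatrix}0&\beta\\1&\alpha\end{smallmatrix}\right)$, whose characteristic polynomial is $t^{2}-\alpha t-\beta$; since $U$ is the smallest $A$-invariant subspace containing $\mathbf{j}$, the eigenvalues of $A|_{U}$ are exactly the main eigenvalues of $G$, and the formula follows. I do not expect any real obstacle: the only point needing care is the justification that $\mathbf{j}$ has nonzero component in each of the two eigenspaces and none elsewhere, which is precisely what Theorem \ref{thm-hagos} gives once $G$ is $2$-walk linear; the rest is the quadratic formula.
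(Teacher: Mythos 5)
Your argument is correct. Note that the paper itself gives no proof of this proposition --- it is quoted as Corollary 2.5 of Hagos \cite{H02} --- so there is no in-paper argument to compare against; what you wrote is the standard derivation and, in substance, Hagos's own. Both of your routes are sound: the spectral-decomposition version correctly uses that $\mathbf{j}=\mathbf{x}_0+\mathbf{x}_1$ with both components nonzero (this is exactly the characterization of main eigenvalues, and Theorem \ref{thm-hagos} applied with $k=2$ --- non-regularity giving independence of $\mathbf{j},A\mathbf{j}$, the relation $A^2\mathbf{j}=\alpha A\mathbf{j}+\beta\mathbf{j}$ giving dependence at the next step --- guarantees there are exactly two), and linear independence of eigenvectors for distinct eigenvalues then forces $\mu_i^2-\alpha\mu_i-\beta=0$ for $i=0,1$. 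The invariant-subspace version with the companion matrix $\left(\begin{smallmatrix}0&\beta\\1&\alpha\end{smallmatrix}\right)$ is an equivalent repackaging. The one point worth stating explicitly rather than parenthetically is that the two main eigenvalues are distinct (they are distinct eigenvalues of the symmetric matrix $A$ by definition), which is what makes the discriminant $\alpha^2+4\beta$ strictly positive and the quadratic formula yield the two stated values.
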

In view of the above definition, the following questions arise:
\begin{ques}\label{que}
Let $\alpha,~\beta$ be integers such that $\alpha\geq0$. Then
\begin{itemize}
\item[\emph{(i)}] For which $(\alpha,\beta)$ does there exist a connected \emph{2}-walk $(\alpha,\beta)$-linear graph?
\item[\emph{(ii)}] How many different valencies can a \emph{2}-walk $(\alpha,\beta)$-linear graph have?
\item[\emph{(iii)}] How many connected non-isomorphic \emph{2}-walk $(\alpha,\beta)$-linear graphs exist for given $\alpha,\beta$?
\end{itemize}
\end{ques}

In view of Question \ref{que}(i), Lin and Qiongxiang \cite{LQ14} showed the following existence result.
\begin{prp}\cite{LQ14}\label{LQresult}
For integers $\alpha\geq0$ and $\beta$, a \emph{2}-walk $(\alpha,\beta)$-linear graph exists if and only if
$\alpha^{2}+4\beta\geq4$ and $(\alpha,\beta)\neq(0,1)$.
\end{prp}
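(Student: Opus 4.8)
The plan is to prove the two implications separately. Necessity is a one‑paragraph degree count; sufficiency amounts to exhibiting graphs, and that is where essentially all the work sits.

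\emph{Necessity.} Let $G$ be a connected $2$‑walk $(\alpha,\beta)$‑linear graph, so by definition $G$ is not regular. With $\mathbf d=A\mathbf j$, reading off coordinate $v$ of $A\mathbf d=\alpha\mathbf d+\beta\mathbf j$ gives $\sum_{u\sim v}d_u=\alpha d_v+\beta$ for all $v$. Let $\delta=\min_v d_v$. Since $G$ is connected and non‑regular, the degree‑$\delta$ vertices do not form a union of components, so some $v_0$ with $d_{v_0}=\delta$ has a neighbour of degree $\ge\delta+1$; the remaining $\delta-1$ neighbours have degree $\ge\delta$, so $\sum_{u\sim v_0}d_u\ge(\delta-1)\delta+(\delta+1)=\delta^2+1$ and hence $\alpha\delta+\beta\ge\delta^2+1$. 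Therefore
\begin{equation*}
\alpha^2+4\beta\ \ge\ \alpha^2+4\delta^2-4\alpha\delta+4\ =\ (\alpha-2\delta)^2+4\ \ge\ 4,
\end{equation*}
and if $(\alpha,\beta)=(0,1)$ then equality forces $\delta=0$, i.e.\ $G=K_1$, which is regular -- a contradiction. (By Proposition~\ref{hagos-cor}, the inequality $\alpha^2+4\beta\ge4$ is just $\mu_0-\mu_1\ge2$.)

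\emph{Sufficiency, the generic constructions.} If $\beta\ge1$, take a connected $\alpha$‑regular graph $H$ (say $K_{\alpha+1}$, or $K_1$ when $\alpha=0$) and attach $\beta$ pendant vertices to every vertex of $H$: the original vertices acquire degree $\alpha+\beta$, the pendants have degree $1$, one checks $\sum_{u\sim v}d_u=\alpha d_v+\beta$ at both kinds of vertex, and $G$ is connected and non‑regular since $\alpha+\beta\ge2$. This realizes every admissible pair with $\beta\ge1$. If instead $\beta\le0$ but $\alpha^2+4\beta>4$, then $\alpha\ge2$ and the interval $(\mu_1,\mu_0)$ has length $\sqrt{\alpha^2+4\beta}>2$; I would pick an integer $r$ in it with $1\le r\le\alpha$ and $2r\ge\alpha+1$ (one verifies $r=\lceil(\alpha+1)/2\rceil$ works), set $R=\beta+r(\alpha+1-r)$ and $b=\beta+(r-1)(\alpha+1-r)$, and note $r\in(\mu_1,\mu_0)$ gives $R\ge r+1$ while $2r\ge\alpha+1$ gives $b\ge1$. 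Build $G$ from a class $X$ inducing a disjoint union of copies of $K_r$ (each $X$‑vertex having exactly one neighbour outside $X$) and a class $Y$ inducing a connected $(\alpha+1-r)$‑regular graph (each $Y$‑vertex having exactly $b$ neighbours in $X$), joined bipartitely with $|X|=b\,|Y|$ and sizes chosen so all pieces exist. Then $X$‑vertices have degree $r$, $Y$‑vertices degree $R>r$, $G$ is connected, and a direct computation confirms $\sum_{u\sim v}d_u=\alpha d_v+\beta$ at both kinds of vertex; so $G$ is $2$‑walk $(\alpha,\beta)$‑linear. (The pendant construction is the special case $r=1$ of this one.)

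\emph{The boundary cases -- the main obstacle.} What remains is $\alpha^2+4\beta=4$. By the necessity argument such a pair must be $(\alpha,\beta)=(2t,\,1-t^2)$ with $t\ge1$, and any realizing graph is forced to have minimum degree exactly $t$, a vertex of degree $t+1$, and maximum degree $\ge t+2$ -- so it has at least three distinct valencies, and the biregular construction above collapses (one gets $b=0$, disconnecting $X$ from $Y$). For $t=1$ the pair $(2,0)$ is realized by the subdivision of $K_{1,3}$ (equivalently $\widetilde E_6$). For general $t$ one must produce and verify a bespoke family carrying the forced degree pattern $t,t+1,\ldots$; I expect this to be the hardest step, and the one I would spend the most effort on.
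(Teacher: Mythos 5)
Your necessity argument is correct and clean, and your two sufficiency constructions check out: for $\beta\ge1$ the pendant construction works, and for $\beta\le0$ with $\alpha^2+4\beta>4$ the two-class construction is sound (I verified that $r=\lceil(\alpha+1)/2\rceil$ lies in $(\mu_1,\mu_0)$, that $R=\beta+r(\alpha+1-r)\ge r+1$ and $b=\beta+(r-1)(\alpha+1-r)\ge1$, and that both local identities $(r-1)r+R=\alpha r+\beta$ and $(\alpha+1-r)R+br=\alpha R+\beta$ hold). However, the proof is not complete. For the boundary pairs $\alpha^2+4\beta=4$, i.e.\ $(\alpha,\beta)=(2t,1-t^2)$ with $t\ge2$, you correctly show that no biregular example can exist and that any example is forced to have valencies $t$, $t+1$ and at least one valency $\ge t+2$, but you then stop at ``one must produce and verify a bespoke family'' without producing one (only $t=1$ is settled, via $T_2$). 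Since your own necessity argument shows these pairs satisfy $\alpha^2+4\beta\ge4$, the ``if'' direction is unproved precisely there; this is a genuine gap, not a loose end.

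For comparison: the paper does not prove this proposition at all --- it is quoted from \cite{LQ14} --- but Section~\ref{sec2} effectively re-derives the sufficiency via equitable partitions, and it contains exactly the ingredient you are missing. The Remark following the theorem on equitable biregular graphs exhibits, for even $\alpha=2t$, connected equitable $3$-valenced graphs with quotient matrix $\left(\begin{smallmatrix} t-1 & 1 & 0\\ 1 & t-1 & 1\\ 0 & 3 & t-1\end{smallmatrix}\right)$; a routine check shows any such graph (valencies $t$, $t+1$, $t+2$; class sizes $3m$, $3m$, $m$) is $2$-walk $(2t,1-t^2)$-linear, and this family includes your $t=1$ example $T_2$ as its smallest member. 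Your generic $\beta\le0$ construction is essentially the paper's equitable biregular construction written out explicitly (your $Q=\left(\begin{smallmatrix} r-1+1 & \cdot\\ \cdot & \cdot\end{smallmatrix}\right)$ data corresponds to their $2\times2$ quotient matrix with $\tr Q=\alpha$, $\det Q=-\beta$), so adapting the paper's three-class quotient matrix into your framework would close the gap with little extra work.
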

In Section \ref{sec2}, we will give constructions of 2-walk $(\alpha, \beta)$-linear graphs for all pairs $(\alpha, \beta)$
satisfying the conditions of Proposition \ref{LQresult}.
Regarding Questions \ref{que}(ii)-(iii), we will show that if for fixed $\alpha$, $\beta$, there exists one connected
2-walk $(\alpha, \beta)$-linear graph, which is not a tree, then there exist infinitely many
pairwise non-isomorphic 2-walk $(\alpha, \beta)$ linear graphs.
Also, we will show that connected
non-bipartite 2-walk $(\alpha, \beta)$-linear graphs exist with exactly two valencies for infinitely
many values of $(\alpha, \beta)$. This shows that the question of Cvetkovi\'{c} to characterize
graphs with two main eigenvalues is very difficult.\\

Now we will discuss harmonic graphs.
We call a graph $\delta$-\emph{harmonic}, if $A\mathbf{d}=\delta\mathbf{d}$ holds.
Note that, connected $\delta$-harmonic graphs with $\delta=0,1$ are regular.
A graph $G$ is \emph{harmonic} if it is $\delta$-harmonic for some positive integer $\delta$.
Note that a regular graph is harmonic. Also, the disjoint union of a harmonic graph and an
isolated vertex is harmonic.
The harmonic trees were determined by Gr\"{u}newald \cite{G02}. He showed that they are exactly the complete graphs
$K_{1}$, $K_{2}$ and $T_{\lambda}$ where $\lambda\geq2$ is an integer and $T_{\lambda}$ is the tree
with one vertex $v$ of valency $\lambda^{2}-\lambda+1$, and any
neighbour of $v$ has valency $\lambda$ and the remaining vertices have valency 1.
The tree $T_{\lambda}$ is $\lambda$-harmonic with $\lambda\geq2$.
Note that non-regular connected harmonic graphs are exactly the
2-walk $(\alpha,0)$-linear graphs.
The following result, which follows from Proposition \ref{hagos-cor},
gives an alternative characterization of non-regular
harmonic graphs in terms of their main eigenvalues.
\begin{prp}\emph{(Cf. \cite[Theorem 8]{N06}.)}\label{thm3}
Let $G$ be a connected graph with spectral radius
$\rho>0$. Then $G$ is harmonic and non-regular if and only if
the only main eigenvalues of $G$ are $\rho$ and $0$.
\end{prp}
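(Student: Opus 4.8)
The plan is to derive Proposition~\ref{thm3} directly from Hagos's two statements (Theorem~\ref{thm-hagos} and Proposition~\ref{hagos-cor}), together with the Perron--Frobenius fact, recalled in the introduction, that for a connected graph the spectral radius is a main eigenvalue.

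For the forward direction I would argue as follows. Suppose $G$ is non-regular and $\delta$-harmonic for some positive integer $\delta$, i.e.\ $A\mathbf{d}=\delta\mathbf{d}$ with $\mathbf{d}=A\mathbf{j}$. Non-regularity means $\mathbf{d}$ is not a scalar multiple of $\mathbf{j}$, so $\mathbf{j}$ and $A\mathbf{j}$ are linearly independent, while $A^{2}\mathbf{j}=A\mathbf{d}=\delta A\mathbf{j}$ lies in their span. Hence $G$ is a connected $2$-walk $(\delta,0)$-linear graph, and Proposition~\ref{hagos-cor} gives that its two main eigenvalues are $\frac{\delta\pm\sqrt{\delta^{2}}}{2}$, namely $\delta$ and $0$. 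Since $G$ is connected, $\rho$ is a main eigenvalue, so $\rho\in\{\delta,0\}$, and $\rho>0$ forces $\rho=\delta$; thus the main eigenvalues of $G$ are exactly $\rho$ and $0$.

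For the converse, suppose the only main eigenvalues of $G$ are $\rho$ and $0$. As $\rho>0$ these are distinct, so $G$ has exactly two main eigenvalues; in particular $G$ is not regular, since a regular graph has a single main eigenvalue. By Theorem~\ref{thm-hagos}, $\mathbf{j}$ and $A\mathbf{j}$ are linearly independent while $\mathbf{j},A\mathbf{j},A^{2}\mathbf{j}$ are linearly dependent, so there exist $\alpha,\beta\in\R$ with $A\mathbf{d}=\alpha\mathbf{d}+\beta\mathbf{j}$; that is, $G$ is connected $2$-walk $(\alpha,\beta)$-linear. By Proposition~\ref{hagos-cor} the two main eigenvalues are the roots of $x^{2}-\alpha x-\beta$, whose sum is $\alpha$ and whose product is $-\beta$. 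Equating the set of roots with $\{\rho,0\}$ yields $\alpha=\rho$ and $\beta=0$, hence $A\mathbf{d}=\rho\mathbf{d}$.

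To close the converse I would then check that $\rho$ is a \emph{positive integer}, so that $G$ is harmonic in the sense defined. Since $\rho>0$, $G$ has an edge and $\mathbf{d}=A\mathbf{j}$ is a nonzero nonnegative integer vector; picking a coordinate $i$ with $d_{i}>0$ gives $\rho=(A\mathbf{d})_{i}/d_{i}\in\mathbb{Q}$. On the other hand $\rho$ is an eigenvalue of the integer matrix $A$, hence an algebraic integer, and a rational algebraic integer is an integer; with $\rho>0$ this makes $\rho$ a positive integer, so $G$ is $\rho$-harmonic, and it is non-regular, completing the proof. The argument is essentially bookkeeping once Hagos's results are invoked; the one point needing care --- the closest thing to an obstacle --- is verifying that the harmonic parameter is genuinely a positive integer, which is why rationality alone is not enough and the algebraic-integer observation is required.
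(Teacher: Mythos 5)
Your proof is correct and follows exactly the route the paper indicates (the paper gives no written proof, merely asserting that the proposition ``follows from Proposition~\ref{hagos-cor}''), deriving both directions from Theorem~\ref{thm-hagos}, Proposition~\ref{hagos-cor} and the Perron--Frobenius fact that $\rho$ is main. Your closing observation that $\rho$ must be a rational algebraic integer, hence a positive integer, is a genuine detail the paper glosses over and is needed to match the paper's definition of ``harmonic'' (which requires $\delta$ to be a positive integer).
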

Non-regular 2-walk $(\alpha,\beta)$-linear trees are characterized in
\cite{HZ05}, whereas non-regular connected
2-walk $(\alpha,\beta)$-linear graphs with
a small number of cycles are studied in \cite{HT06}, \cite{HLZ05},
\cite{FLG05} and \cite{BGGP03}.\\

The following result characterizes the connected 2-harmonic graphs, which is
shown by Gr\"{u}newald \cite{G02}.
\begin{prp}\emph{(Cf. \cite[Corollary 2.1]{G02})}\label{delta-harmonic-trees}
Let $G$ be a connected \emph{2}-harmonic graph. Then $G$ is either a cycle or the tree $T_{2}$.
\end{prp}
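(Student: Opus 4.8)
The plan is to translate the hypothesis $A\mathbf{d}=2\mathbf{d}$ into the combinatorial statement that for every vertex $v$ the sum of the degrees of the neighbours of $v$ equals $2\,d(v)$; that is, every vertex has a neighbourhood of average degree exactly $2$. I would first dispose of the regular case: if $G$ is $k$-regular and connected then $\mathbf{d}=k\mathbf{j}$, so $A\mathbf{d}=k^{2}\mathbf{j}=2k\mathbf{j}$ forces $k=2$, and a connected $2$-regular graph is a cycle. So I may assume $G$ is non-regular and has at least one edge (the one-vertex graph is a degenerate case that one excludes by convention), and the goal becomes to show $G\cong T_{2}$.

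Next I would record two elementary facts. First, if $\ell$ is a leaf with neighbour $w$, then the condition at $\ell$ gives $d(w)=2d(\ell)=2$, and writing $x$ for the other neighbour of $w$, the condition at $w$ gives $1+d(x)=4$, so $d(x)=3$. In particular, as soon as $G$ has a leaf we have $\Delta\ge 3$, where $\Delta$ is the maximum degree. Since a non-regular connected graph with $\Delta=2$ has minimum degree $1$ and hence a leaf, and since $\Delta\le 1$ would force $G=K_{2}$, which is not $2$-harmonic, I conclude $\Delta\ge 3$. Second, if $v$ has degree $\Delta$ and all neighbours of $v$ have degree at least $2$, then $2\Delta=\sum_{u\sim v}d(u)\ge 2\Delta$ forces every neighbour of $v$ to have degree exactly $2$.

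Now I would run the main argument. Fix a vertex $v$ of degree $\Delta\ge 3$. If $v$ had a leaf neighbour, the first fact would give $d(v)=2$, a contradiction; hence every neighbour of $v$ has degree at least $2$, and by the second fact all $\Delta$ of them have degree exactly $2$. Choosing such a neighbour $w$, with neighbours $v$ and $x$ (necessarily $x\ne v$), the condition at $w$ reads $\Delta+d(x)=4$, so $1\le d(x)=4-\Delta$ forces $\Delta=3$ and $d(x)=1$. Thus $v$ has exactly three neighbours $w_{1},w_{2},w_{3}$, each of degree $2$, and each $w_{i}$ has a further neighbour $x_{i}$ of degree $1$; re-applying the condition at the $w_{i}$ rules out any coincidence among $v,w_{1},w_{2},w_{3},x_{1},x_{2},x_{3}$ (for example $x_{i}=w_{j}$ would make the neighbourhood-sum at $w_{j}$ equal $5$ rather than $4$). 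These seven vertices are therefore distinct, and since the $x_{i}$ are leaves while $v,w_{1},w_{2},w_{3}$ already use their full degree, connectedness shows there are no further vertices, i.e. $G\cong T_{2}$.

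The routine content is just the handful of small degree computations. The only step needing care — the main obstacle — is to arrange the degree-chasing so that both the bound $\Delta=3$ and the exact shape of $T_{2}$ drop out, rather than merely the existence of a leaf or of a degree-$3$ vertex. The device that makes this work is to start the computation at a vertex of maximum degree, so that the inequality $1\le d(x)=4-\Delta$ immediately pins down $\Delta$.
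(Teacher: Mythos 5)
The paper does not prove this proposition at all --- it is quoted directly from Gr\"unewald \cite{G02} --- so there is no in-paper argument to compare yours against. Your proof is correct and self-contained: the reduction of $A\mathbf{d}=2\mathbf{d}$ to the local condition $\sum_{u\sim v}d(u)=2d(v)$, the disposal of the regular case ($k^2=2k$ gives $k=2$, hence a cycle), and the degree chase anchored at a maximum-degree vertex (all neighbours of $v$ forced to degree $2$, whence $\Delta+d(x)=4$ pins down $\Delta=3$ and produces exactly the seven vertices of $T_2$) all check out, including the exclusion of coincidences among the seven vertices. The only caveat is the degenerate one: $K_1$ satisfies $A\mathbf{d}=2\mathbf{d}$ vacuously and is neither a cycle nor $T_2$, which you correctly flag as excluded by convention (the paper itself treats isolated vertices as a trivial appendage to harmonic graphs).
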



\section{Equitable graphs}\label{sec2}
In this section, we will construct graphs with at most $r$
main eigenvalues by using equitable partitions of graphs. We first recall
some definitions which will be used in this section.\\

A graph $G$ is called \emph{$t$-valenced} if it has
exactly $t$ distinct valencies $k_{1},\ldots,k_{t}$.
We call $G$ a \emph{biregular graph} when $t=2$.
Assume that $G$ has $t$ distinct valencies
$k_{1},\ldots,k_{t}$. We write $V_i:=\{v\in V (G)\mid d_v=k_i\}$
and $n_i:=|V_i|$ for $i\in\{1,\ldots,t\}$. Clearly the
subsets $V_i$ partition the vertex set of $G$ and this partition
is called the \emph{valency partition} of $G$.
Let $\pi = \{\pi_{1},\ldots,\pi_{t}\}$ be a partition of the vertices of $G$,
where $\pi_{i}$ is called a \emph{block} of $\pi$.
For each vertex $x$ in $\pi_{i}$, write $d^{(j)}_{x}$
for the number of neighbours of $x$ in $\pi_{j}$. Then
we write $b_{ij} = \frac{1}{\mid\pi_{i}\mid} \sum\limits_{x\in \pi_{i}} d^{(j)}_{x}$ for
the average number of neighbours in $\pi_{j}$ of vertices in $\pi_{i}$.
The matrix $B_{\pi}:=(b_{ij})$ is called the \emph{quotient matrix} of $\pi$
and $\pi$ is called \emph{equitable} if for all $i$ and $j$, we have $d^{(j)}_{x} = b_{ij}$
for each $x\in \pi_{i}$. A graph $G$ is called an \emph{equitable graph} if
its valency partition is equitable.
An \emph{equitable biregular graph} is a biregular graph whose valency partition
is equitable.
We refer the reader to Godsil and Royles's book
\cite[Chapter 9]{gr01} for the necessary background on equitable
partitions.\\

The following result was essentially shown by Cvetkovi\'{c} \cite{C78}.
It gives a sufficient condition for a graph to have at most $r$ main eigenvalues.
\begin{thm}\emph{(Cf. \cite[Theorem 3]{C78}.)}\label{eqtble-prop}
Let $G$ be a connected graph and $\pi$ be an equitable partition of $G$.
Let $Q$ be the quotient matrix of $\pi$, say with exactly $r$ distinct eigenvalues.
Then
\begin{itemize}
\item[\emph{(1)}]$G$ has at most $r$ main eigenvalues.
\end{itemize}
In particular, the following holds:
\begin{itemize}
\item[\emph{(a)}] If $G$ is equitable and $t$-valenced, then $G$ has at most $t$ main eigenvalues;
\item[\emph{(b)}] If, moreover $G$ is equitable and biregular, then $G$ has exactly two main eigenvalues.
\end{itemize}
\end{thm}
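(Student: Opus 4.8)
The plan is to prove part (1) first and then derive (a) and (b) as special cases. For part (1), recall the standard fact about equitable partitions (see \cite[Chapter 9]{gr01}): if $\pi$ is equitable with characteristic matrix $S$ (the $|V|\times t$ matrix whose $i$-th column is the indicator vector of block $\pi_i$), then $AS=SQ$, where $Q=B_\pi$ is the quotient matrix. I would first observe that $\mathbf{j}$ lies in the column space of $S$, since the block indicator vectors sum to $\mathbf{j}$. The key computation is then that $A^m\mathbf{j}\in \operatorname{col}(S)$ for every $m\ge 0$: writing $\mathbf{j}=S\mathbf{u}$ with $\mathbf{u}=\mathbf{j}_t$, we get $A^m\mathbf{j}=A^mS\mathbf{u}=SQ^m\mathbf{u}$ by repeated application of $AS=SQ$. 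Hence the whole ``walk space'' $\operatorname{span}\{\mathbf{j},A\mathbf{j},A^2\mathbf{j},\dots\}$ is contained in $\operatorname{col}(S)$, which has dimension at most $t$; more precisely it is contained in $S\cdot\operatorname{span}\{\mathbf{u},Q\mathbf{u},Q^2\mathbf{u},\dots\}$.

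Next I would invoke Theorem \ref{thm-hagos}: the number of main eigenvalues of $G$ equals the largest $k$ for which $\mathbf{j},A\mathbf{j},\dots,A^{k-1}\mathbf{j}$ are linearly independent, i.e. the dimension of the walk space. So it suffices to bound $\dim\operatorname{span}\{\mathbf{u},Q\mathbf{u},Q^2\mathbf{u},\dots\}$ by $r$, the number of distinct eigenvalues of $Q$. This is the Cayley–Hamilton-type argument: the minimal polynomial of $Q$ has degree equal to... well, at most $t$, but if $Q$ has exactly $r$ distinct eigenvalues then — since we only need an \emph{upper} bound — it is cleanest to use that the minimal polynomial $m_Q(x)$ divides $\prod_{\lambda}(x-\lambda)$ over the $r$ distinct eigenvalues $\lambda$ only when $Q$ is diagonalizable, which need not hold. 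To avoid this subtlety I would instead argue directly: any polynomial identity $p(Q)=0$ forces $p(Q)\mathbf{u}=0$, and more to the point, the Krylov subspace $\operatorname{span}\{\mathbf{u},Q\mathbf{u},\dots\}$ has dimension at most $\deg m_Q$. The honest statement is that this gives at most $t$ main eigenvalues in general; to get $r$ one uses that the eigenvalues of $Q$ are a subset of the eigenvalues of $A$ (again a standard consequence of $AS=SQ$ together with $S$ having full column rank), and the main eigenvalues of $G$ are eigenvalues of $A$ whose eigenvectors meet $\mathbf{j}$; restricting to $\operatorname{col}(S)$, the relevant $A$-eigenvalues are exactly the $Q$-eigenvalues that are ``main for $Q$'' with respect to $\mathbf{u}$, of which there are at most $r$.

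For (a): if $G$ is equitable and $t$-valenced, take $\pi$ to be the valency partition, which is equitable by hypothesis, with $t$ blocks, so $Q$ is $t\times t$ and has at most $t$ distinct eigenvalues; apply (1). For (b): with $t=2$, part (a) gives at most two main eigenvalues. It remains to rule out the case of exactly one main eigenvalue. By the theorem of Cvetkovi\'c cited in the introduction, a connected graph has exactly one main eigenvalue if and only if it is regular; since $G$ is biregular it has two distinct valencies and is therefore not regular, so it has at least two, hence exactly two, main eigenvalues.

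The main obstacle I anticipate is the passage from ``at most $t$'' to ``at most $r$'' in part (1): one must be careful that a non-diagonalizable quotient matrix could a priori have a Krylov subspace of dimension exceeding its number of distinct eigenvalues, so the argument has to go through the eigenvector/main-eigenvalue characterization rather than a naive minimal-polynomial degree count. Everything else — the identity $AS=SQ$, $\mathbf{j}\in\operatorname{col}(S)$, and the reduction of (a), (b) — is routine.
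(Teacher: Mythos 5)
Your proof is correct in substance but takes a genuinely different route from the paper's, and your final step deserves a comment. The paper's proof is more direct: with $P$ the characteristic matrix of $\pi$, it writes the all-ones vector $\mathbf{j}'$ of length $t$ as a sum of eigenvectors $\mathbf{v}_1,\dots,\mathbf{v}_r$ of $Q$, one for each distinct eigenvalue, and then observes that $\mathbf{j}=P\mathbf{j}'=\sum_{i}P\mathbf{v}_i$ exhibits $\mathbf{j}$ as a sum of at most $r$ eigenvectors of $A$ (via $AP=PQ$ and injectivity of $P$); since $A$ is symmetric, the eigenspace decomposition of $\mathbf{j}$ is unique, so at most $r$ eigenvalues are main. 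You instead go through Theorem \ref{thm-hagos} and the containment of the walk space in $\operatorname{col}(P)$, which is a valid alternative. The irony is that the ``main obstacle'' you identify --- possible non-diagonalizability of $Q$ --- is exactly the point the paper glosses over (its decomposition of $\mathbf{j}'$ into one eigenvector per distinct eigenvalue presupposes it), and it is in fact a non-issue: $Q=(P^{\top}P)^{-1}P^{\top}AP$ with $P^{\top}P$ a positive diagonal matrix, so $Q$ is similar to the symmetric matrix $(P^{\top}P)^{-1/2}P^{\top}AP\,(P^{\top}P)^{-1/2}$ and hence diagonalizable with minimal polynomial of degree exactly $r$. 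Had you used this, your original Krylov bound $\dim\operatorname{span}\{\mathbf{u},Q\mathbf{u},\dots\}\le\deg m_Q=r$ would close part (1) at once, and you could drop the somewhat compressed detour through ``$Q$-eigenvalues that are main for $Q$'' (which is repairable --- the eigenprojections of $\mathbf{j}$ lie in the walk space, hence in the $A$-invariant subspace $\operatorname{col}(P)$ on which $A$ acts as $Q$ --- but as written it leans on an undefined notion). Your reductions for (a) and (b), including ruling out a single main eigenvalue via regularity, coincide with the paper's.
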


\begin{proof}
Assume $\pi = \{\pi_{1},\ldots, \pi_{t}\}$. Let $P$ be the $|V(G)|\times t$ matrix with
characteristic vectors $(\chi(p_i))$ of blocks of $\pi$ as its columns.
Let $\sigma_1,\sigma_2\ldots,\sigma_{t}$ be the distinct eigenvalues of $Q$.
Then $\mathbf{j}'=\sum\limits_{i=1}^{r}\mathbf{v}_{i}$, where $\mathbf{j}'$ is the all-ones vector of
length $t$ and $\mathbf{v}_i$ is an eigenvector of $Q$ corresponding to $\sigma_{i}$ $(1\leq i\leq r)$.
And $\mathbf{j}=P\mathbf{j}'=\sum\limits_{i=1}^{r}\mathbf{w}_{i}$ where $\mathbf{j}$ is the all-ones vector
of length $|V(G)|$ and $\mathbf{w}_{i}=P\mathbf{v}_{i}$ is an eigenvector of $A$ corresponding to
$\sigma_{i}$ $(1\leq i\leq r)$. Thus $G$ has at most $r$ main eigenvalues.

(1) implies (a) as the valency partition is equitable and its quotient matrix has at most $t$ distinct eigenvalues.
Moreover, if $G$ is
equitable biregular, then the number of main eigenvalues can not be one.
Thus, $G$ has precisely two main eigenvalues.
\end{proof}

%

\begin{rem}
For $\lambda\geq2$, the tree $T_{\lambda}$ (as introduced in Section \emph{\ref{sec1}}) is equitable
and has three distinct valencies, but only two main eigenvalues. This shows that you can have less
than $r$ main eigenvalues in Proposition \emph{\ref{eqtble-prop}}.
\end{rem}

\begin{rem}\label{rem1}
The condition \emph{(b)} of Proposition \emph{\ref{eqtble-prop}} gives a recipe to construct infinitely many
\emph{2}-walk $(\alpha,\beta)$-linear biregular graphs. A particular instance is to consider the
cone\footnote{The \emph{cone over a graph} $G$ is the graph with vertex set $\{ \infty \} \cup V(G)$
such that $\infty$ is adjacent to all vertices of $G$. When $G$ is not specified, we call it a \emph{cone}.}
over a regular graph. Note that the cone of a regular graph $G$ is harmonic if the graph $G$ is complete
i.e. which is also a complete graph.
As $\alpha$ and $\beta$ are determined by the main eigenvalues $\mu_{0}=\rho$ and $\mu_{1}$ uniquely for a connected
\emph{2}-walk $(\alpha,\beta)$-linear graph, it is easy to construct infinitely many values of $(\alpha,\beta)$
such that a \emph{2}-walk $(\alpha,\beta)$-linear graph exists, using the cone over regular graphs.
\end{rem}


In the spirit of Proposition \ref{LQresult}, we will determine the pairs $(\alpha, \beta)$ for which there exists a connected
equitable biregular 2-walk $(\alpha, \beta)$-linear graph.
\begin{thm}
Let $\alpha, \beta$ be integers such that $\alpha \geq 0$.
Then there exists a connected equitable biregular \emph{2}-walk $(\alpha, \beta)$-linear graph
if and only if $\alpha^2 + 4 \beta > 4$.
\end{thm}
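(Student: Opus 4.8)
The plan is to prove the two implications separately. Necessity follows quickly by analysing the quotient matrix of the valency partition of such a graph; sufficiency is handled by one explicit construction, namely a regular graph with equally many pendant vertices hung on each of its vertices, together with some extra edges among the pendants. The only step that uses the hypothesis in an essential way is an elementary inequality which holds exactly when $\alpha^2+4\beta>4$.

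\emph{Necessity.} Suppose $G$ is a connected equitable biregular $2$-walk $(\alpha,\beta)$-linear graph, with valency partition $\pi=\{V_1,V_2\}$, $|V_i|=n_i$, and distinct valencies $k_1,k_2$. Since $\pi$ is equitable, its quotient matrix is $Q=\begin{pmatrix}a & k_1-a\\ k_2-d & d\end{pmatrix}$ with nonnegative integer entries; connectivity of $G$ forces an edge joining the (nonempty) blocks $V_1$ and $V_2$, so $k_1-a\geq1$, and then $k_2-d\geq1$ because $n_1(k_1-a)=n_2(k_2-d)$. By Theorem~\ref{eqtble-prop}(b), $G$ has exactly two main eigenvalues, and (as in the proof of Theorem~\ref{eqtble-prop}) every main eigenvalue of $G$ is an eigenvalue of the $2\times2$ matrix $Q$; hence the (distinct) eigenvalues of $Q$ are exactly the two main eigenvalues $\mu_0,\mu_1$ of $G$. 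By Proposition~\ref{hagos-cor}, $a+d=\tr Q=\mu_0+\mu_1=\alpha$ and $ad-(k_1-a)(k_2-d)=\det Q=\mu_0\mu_1=-\beta$, so
\[
\alpha^2+4\beta=(a-d)^2+4(k_1-a)(k_2-d)\geq 4 .
\]
Equality would force $a=d$ and $k_1-a=k_2-d=1$, hence $k_1=a+1=d+1=k_2$, contradicting biregularity; so $\alpha^2+4\beta>4$.

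\emph{Sufficiency.} Let $\alpha\geq0$ and $\beta$ be integers with $\alpha^2+4\beta>4$. Put $p=\lfloor\alpha/2\rfloor$, $q=\lceil\alpha/2\rceil$ (so $p+q=\alpha$ and $pq=\lfloor\alpha^2/4\rfloor$) and $m=pq+\beta$. First I would check $m\geq1$: if $\alpha$ is even then $m=\alpha^2/4+\beta$ is an integer exceeding $1$, so $m\geq2$; if $\alpha$ is odd then $4m=\alpha^2-1+4\beta>3$, so $m\geq1$. Now take a connected $q$-regular graph $H$ on $N$ vertices, with $N$ chosen large enough (and of suitable parity) that a $p$-regular graph on $mN$ vertices exists — for $\alpha\leq2$ one is forced to a small $H$, but then $p\leq1$ and no constraint arises. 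Form $G$ from $H$ by attaching to each vertex of $H$ a private set of $m$ new pendant vertices, and then adding, on the set $W$ of all $mN$ new vertices, the edges of an arbitrary $p$-regular graph. Then each vertex of $H$ has valency $q+m$ and each vertex of $W$ has valency $p+1$; these are distinct, since $q+m=p+1$ would give $m=p-q+1$, which is $0$ when $\alpha$ is odd and $1$ when $\alpha$ is even, contradicting $m\geq1$, respectively $m\geq2$. So $G$ is biregular with valency partition $\{V(H),W\}$, and it is connected because $H$ is connected and every vertex of $W$ has a neighbour in $H$.

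Finally I would check the spectral data. The valency partition is equitable with quotient matrix $Q'=\begin{pmatrix}q & m\\ 1 & p\end{pmatrix}$, which has trace $\alpha$ and determinant $pq-m=-\beta$; hence the eigenvalues of $Q'$ are $\tfrac{\alpha\pm\sqrt{\alpha^2+4\beta}}{2}$, two distinct real numbers. Since $G$ is connected, equitable and biregular, Theorem~\ref{eqtble-prop}(b) gives it exactly two main eigenvalues, which are therefore these eigenvalues of $Q'$, and by Proposition~\ref{hagos-cor} $G$ is $2$-walk $(\alpha,\beta)$-linear; for $\alpha=0$ this is the star $K_{1,\beta}$ with $\beta\geq2$, and for $\alpha=1$ the double star with $\beta$ pendants at each of its two centres. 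In this scheme both the necessity argument and the parity/size bookkeeping in the construction are routine; the step I expect to be the real content is verifying $m=\lfloor\alpha^2/4\rfloor+\beta\geq1$ (and $\geq2$ when $\alpha$ is even, which is precisely what makes the graph biregular), since this is exactly where the strict inequality $\alpha^2+4\beta>4$ is used, and it is why the boundary case $\alpha^2+4\beta=4$ — the equality case of the necessity argument — must be excluded.
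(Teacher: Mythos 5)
Your proof is correct, and it takes a genuinely different (and in places more complete) route than the paper's. For necessity, the paper first invokes Proposition \ref{LQresult} to dismiss $\alpha^2+4\beta<4$ and then only analyses the boundary case $\alpha^2+4\beta=4$ by showing the putative quotient matrix would have equal row sums; you instead give a single self-contained argument, reading off $\alpha=\tr Q$ and $-\beta=\det Q$ from the quotient matrix of the valency partition and observing that $\alpha^2+4\beta=(a-d)^2+4q_{12}q_{21}\geq 4$, with equality forcing $k_1=k_2$. This buys independence from the (unpublished) existence result of Lin and Qiongxiang. For sufficiency, the paper only writes down a candidate $2\times 2$ quotient matrix $\left(\begin{smallmatrix}\alpha' & 1\\ \beta+\alpha'(\alpha-\alpha') & \alpha-\alpha'\end{smallmatrix}\right)$ and asserts, via Theorem \ref{eqtble-prop} and Remark \ref{rem1}, that a connected equitable biregular graph realizing it exists; this realizability step is not actually argued there (Remark \ref{rem1} only treats cones over regular graphs). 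Your construction --- a connected $q$-regular graph with $m=pq+\beta$ pendants attached to each vertex, plus a $p$-regular graph on the pendant set --- explicitly realizes essentially the same quotient data $\left(\begin{smallmatrix}q & m\\ 1 & p\end{smallmatrix}\right)$, and you verify the inequalities $m\geq 1$ (resp.\ $m\geq 2$ for $\alpha$ even) that make the graph biregular, which is exactly where the strict inequality $\alpha^2+4\beta>4$ enters; the paper leaves this verification implicit. The remaining bookkeeping (choosing $N$ even and large enough for the regular pieces to exist) is routine, as you note.
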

\begin{proof}
Let $\alpha, \beta$ be integers such that $\alpha \geq 0$ and $\alpha^2 + 4 \beta > 4$.
To show the existence in view of Theorem \ref{eqtble-prop} and Remark \ref{rem1}, we need
to show the existence of a $2 \times 2$ matrix
                $Q=\left(
                      \begin{array}{cc}
                        q_{11} & q_{12} \\
                        q_{12} & q_{22} \\
                      \end{array}
                    \right)$
such that all entries are non-negative integers, $q_{12}$, $q_{21}$ are positive,
$\tr(Q) = \alpha$, $\det(Q) = -\beta$ and $q_{11} + q_{12} < q_{21} + q_{22}$.
Let $\alpha' := \lfloor \frac{\alpha}{2} \rfloor$.
Let $q_{11} := \alpha'$, $q_{22} := \alpha - \alpha'$, $q_{12} = 1$ and
$q_{21} := -\det{Q} + \alpha' (\alpha- \alpha')$.
This gives us a matrix $Q$ as required.
Thus, it shows the existence of the required graphs.

On the other hand in view of Proposition \ref{LQresult}, we only need to consider the case
where $\alpha^2 + 4 \beta = 4$. Let $Q$ be a $2\times 2$ matrix such that all
entries are non-negative integers, $q_{12}$, $q_{21}$ are positive, $\tr(Q) = \alpha$ and $\det(Q) =-\beta = (\frac{\alpha}{2})^2 -1$.
Then $q_{11}=\alpha/2=q_{22}$ and $q_{12}=q_{21}=1$ and hence $q_{11}+q_{12}=q_{21}+q_{22}=\frac{\alpha}{2}+1$.
So for this case there does not exist a connected equitable biregular
2-walk $(\alpha,\beta)$-linear graph. This shows the theorem.
\end{proof}


\begin{rem}
There exists a connected \emph{3}-valenced equitable \emph{2}-walk $(\alpha, \beta)$-linear graph with $\alpha^2 + 4\beta = 4$,
by considering the connected equitable graphs with quotient matrix ($\alpha$ is even in this case, and let $\alpha' = \frac{\alpha}{2}$)
$\left(
  \begin{array}{ccc}
    \alpha'-1 & 1 & 0 \\
    1 & \alpha'-1 & 1 \\
    0 & 3 & \alpha'-1 \\
  \end{array}
\right)$.
This shows that for the possible pairs of $(\alpha, \beta)$ for which there exists a
\emph{2}-walk $(\alpha, \beta)$-linear graph (see Proposition \emph{\ref{LQresult}}),
there exists an equitable \emph{2}-walk $(\alpha, \beta)$-linear graph.
\end{rem}

\section{Seidel matrix, switching classes and regular two-graphs}\label{sec3}
In this section, we study the Seidel matrix of a graph, switching classes and regular two-graphs.\\

We call a regular graph \emph{strongly regular} if there are constants
$\lambda$ and $\mu$ such that every pair of distinct vertices has $\lambda$
or $\mu$ common neighbours if they are adjacent or non-adjacent respectively.
The \emph{Seidel matrix} $S$ of a graph, with adjacency matrix $A$, is defined by $S=J-I-2A$.
A \emph{strong graph} is a graph such that its Seidel matrix $S$ satisfies $S^{2}\in\langle S,I,J\rangle$,
where $\langle \ldots\rangle$ denotes the $\mathbb{R}$-span. Seidel \cite{Sei68} showed that:
\begin{prp}\cite{Sei68}
Let $G$ be a graph with Seidel matrix $S$. Then $G$ is strong if and only if at least one of the following
holds:
\begin{itemize}
\item[\emph{(i)}] $G$ is strongly regular.
\item[\emph{(ii)}] $S$ has exactly two distinct eigenvalues.
\end{itemize}
\end{prp}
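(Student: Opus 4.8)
The plan is to work directly with the algebraic condition $S^2 \in \langle S, I, J\rangle$ and to case-split according to whether the all-ones vector $\mathbf{j}$ is an eigenvector of $S$. First I would write the defining relation as $S^2 = aS + bI + cJ$ for real scalars $a,b,c$. Since $S$ is a real symmetric matrix, it is diagonalizable with an orthogonal eigenbasis; the strategy is to examine how this relation constrains the spectrum of $S$ on the subspace $\mathbf{j}^\perp$ and on $\mathbf{j}$ itself. If we pick any eigenvector $\mathbf{x}$ of $S$ orthogonal to $\mathbf{j}$ with eigenvalue $\theta$, then applying both sides to $\mathbf{x}$ gives $\theta^2 \mathbf{x} = a\theta\mathbf{x} + b\mathbf{x}$, so every such $\theta$ satisfies the quadratic $\theta^2 - a\theta - b = 0$. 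Hence $S$ restricted to $\mathbf{j}^\perp$ has at most two distinct eigenvalues.

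From here I would split into two cases. \emph{Case 1:} $\mathbf{j}$ is itself an eigenvector of $S$, say $S\mathbf{j} = \rho_0 \mathbf{j}$. Since $S = J - I - 2A$, this means $A\mathbf{j} = \frac{1}{2}(n - 1 - \rho_0)\mathbf{j}$, so $G$ is regular. A regular graph has $A$ and $S$ sharing the eigenspace decomposition $\langle \mathbf{j}\rangle \oplus \mathbf{j}^\perp$, and on $\mathbf{j}^\perp$ the eigenvalues of $S$ are $-1-2\lambda_i$ where $\lambda_i$ runs over the non-principal eigenvalues of $A$. The observation above forces $S$ to have at most two eigenvalues on $\mathbf{j}^\perp$, hence $A$ has at most two non-principal eigenvalues, i.e. $A$ has at most three eigenvalues in total; a connected regular graph with at most three distinct eigenvalues is strongly regular (allowing the degenerate complete-graph case), which gives conclusion (i). \emph{Case 2:} $\mathbf{j}$ is not an eigenvector of $S$. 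Then $\mathbf{j}$ has a nonzero component along at least two eigenspaces of $S$. Combined with the fact that $S$ has at most two distinct eigenvalues on $\mathbf{j}^\perp$ — and that the relation $S^2 = aS + bI + cJ$ together with the $J$ term is exactly what permits an extra eigenvalue attached to $\mathbf{j}$ only when $c \neq 0$ — I would argue that $S$ has exactly two distinct eigenvalues overall, giving conclusion (ii). The reverse implications are the easy direction: if $S$ has exactly two eigenvalues $\theta_1, \theta_2$ then $(S - \theta_1 I)(S - \theta_2 I) = 0$, so $S^2 \in \langle S, I\rangle \subseteq \langle S, I, J\rangle$; and if $G$ is strongly regular, the standard identity $A^2 = kI + \lambda A + \mu(J - I - A)$ translates under $S = J - I - 2A$ into a relation of the form $S^2 \in \langle S, I, J\rangle$ by a direct substitution.

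The step I expect to be the main obstacle is Case 2: pinning down precisely why $\mathbf{j}$ not being an eigenvector forces $S$ to have \emph{exactly} two distinct eigenvalues globally, rather than possibly three (two on $\mathbf{j}^\perp$ and a third eigenvalue whose eigenvector is not orthogonal to $\mathbf{j}$). The resolution should come from analyzing the structure of $\langle S, I, J\rangle$ as an algebra: the point is that $\{I, J\}$ spans the commutant-type piece attached to the regular/constant-row situation, so the freedom afforded by the $J$ term is only "one extra dimension," and it can be spent either to make $G$ regular (Case 1) or to accommodate a genuinely two-eigenvalue Seidel matrix, but not both failing simultaneously. Making this dichotomy airtight — quite possibly by an explicit dimension count of the algebra generated by $S$ versus $\langle S,I,J\rangle$, or by carefully tracking the component of $\mathbf{j}$ along each $S$-eigenspace — is where the real work lies; the rest is bookkeeping with the substitution $S = J - I - 2A$.
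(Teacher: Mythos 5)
The paper itself gives no proof of this proposition (it is quoted from Seidel \cite{Sei68}), so your attempt can only be judged on its own merits. The decisive problem is the one you yourself flag: Case 2 is never actually proved. The talk of $\{I,J\}$ spanning a ``commutant-type piece'' and the $J$ term affording ``one extra dimension'' is not an argument, and as written the proposal does not rule out the scenario you worry about (two eigenvalues coming from eigenvectors orthogonal to $\mathbf{j}$ plus a third one elsewhere). The missing step can be closed concretely. Write $S^2=aS+bI+cJ$ and let $\theta_1,\theta_2$ be the roots of $x^2-ax-b$; the relation becomes $(S-\theta_1I)(S-\theta_2I)=cJ=c\,\mathbf{j}\mathbf{j}^{T}$. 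Apply both sides to a unit eigenvector $v$ of $S$ with eigenvalue $\mu$ to get $(\mu-\theta_1)(\mu-\theta_2)\,v=c\,(\mathbf{j}^{T}v)\,\mathbf{j}$. If $\mu\notin\{\theta_1,\theta_2\}$ the left side is a nonzero multiple of $v$, so $v$ is parallel to $\mathbf{j}$ and $\mathbf{j}$ is an eigenvector of $S$. Contrapositively, in your Case 2 every eigenvalue of $S$ lies in $\{\theta_1,\theta_2\}$, and there are exactly two of them because $S=\lambda I$ would make $\mathbf{j}$ an eigenvector. This rank-one argument (the right side has rank at most one, so at most one eigendirection can escape the quadratic) is the idea your proposal is missing.

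There is also a smaller gap in Case 1: you invoke ``a \emph{connected} regular graph with at most three distinct eigenvalues is strongly regular,'' but connectivity is not a hypothesis of the proposition, and the implication fails without it (e.g.\ $C_4\cup C_4$ is regular with spectrum $\{2,0,-2\}$ and is not strongly regular; it is also not strong, so it is not a counterexample to the proposition, only to your step). The robust route is the one you mention at the end for the converse, run forwards: for a $k$-regular $G$ one has $SJ=JS=(n-1-2k)J$, and substituting $S=J-I-2A$ into $S^2=aS+bI+cJ$ yields $A^2\in\langle A,I,J\rangle$; reading off the $(u,v)$ entries of this identity for adjacent and non-adjacent pairs produces the constants $\lambda$ and $\mu$ directly, with no eigenvalue count and no connectivity assumption. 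With these two repairs the overall architecture of your proof (quadratic constraint on eigenvalues off $\mathbf{j}$, dichotomy on whether $\mathbf{j}$ is an eigenvector, easy converses by substitution) is sound.
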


Let $\sigma=\{U,V-U\}$ be a bipartition of the vertex set $V$ of $G$.
The graph $G^{\sigma}$ with vertex set $V$ is obtained from $G$ as follows:
if $x,~y$ are two distinct vertices of $V$ such that if $x,y\in U$ or $x,y\in V-U$,
then $x\sim y$ in $G^{\sigma}$ if and only if $x\sim y$ in $G$ and if
$\mid\{x,y\}\cap U\mid=1$, then $x\nsim y$ in $G^{\sigma}$ if and only if
$x\sim y$ in $G$. In other words, the edges and non-edges between $U$ and
$V-U$ have been switched. We call $G^{\sigma}$ the graph obtained from $G$ by
\emph{(Seidel) switching} with respect to $\sigma$. It is well-known that the spectra of
$S(G)$ and $S(G^{\sigma})$ are the same. The \emph{switching class} or
\emph{two-graph} $[G]$ of $G$ is the set \{$G^{\sigma}$ $\mid$ $\sigma$ is a bipartition
of the vertex set of $G$ possible with one part empty\}.\\

Note that switching induces an equivalence relation on graphs, with switching classes
as its equivalence classes. We say that the two-graph $[G]$ is \emph{regular} if the Seidel matrix $S(G)$ has exactly
two distinct eigenvalues. Note that if the number of vertices of $G$ is at least two, then the
Seidel matrix $S(G)$ has at least two eigenvalues. The regular two-graphs containing a
complete graph or an empty graph are called \emph{trivial}.
The graphs in regular two-graphs are examples of strong graphs.
We refer to the detailed survey on two-graphs by Seidel \cite{Sei76}.\\

The following lemma is a slight modification of \cite[Lemma 4.1, Proposition 4.2, Proposition 4.3]{VKX15} by Van Dam et al.
\begin{lem}\cite{VKX15}\label{thm1}
Let $G$ be an $n$-vertex graph within a non-trivial regular two-graph. Then the following hold.
\begin{itemize}
  \item[\emph{(i)}] If $G$ is regular, then it is a strongly regular.
  \item[\emph{(ii)}] If $G$ is not connected, then it is a disjoint union of an isolated vertex and a strongly regular graph.
  \item[\emph{(iii)}] If $G$ is non-regular with Seidel eigenvalues $[-1-2\theta_{0}]^{m_{0}},[-1-2\theta_{1}]^{m_{1}}$,
  then it has four distinct (adjacency) eigenvalues $\mu_{0},\mu_{1},[\theta_{0}]^{m_{0}-1},[\theta_{1}]^{m_{1}-1}$
     of which $\mu_{0}$ and $\mu_{1}$ are main eigenvalues and $\theta_{0},\theta_{1}$ are non-main eigenvalues. Main eigenvalues
      $\mu_{0},\mu_{1}$ are uniquely determined by $\theta_{0},~\theta_{1},~m_{0},~m_{1},~n$ and the number of edges.
\end{itemize}
\end{lem}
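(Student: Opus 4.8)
The plan is to do everything through the Seidel matrix $S=S(G)=J-I-2A$. Since $[G]$ lies in a non-trivial regular two-graph, $S$ has exactly two distinct eigenvalues, which I will write as $s_i=-1-2\theta_i$ with multiplicity $m_i$ for $i=0,1$; thus $m_0+m_1=n$ and $(S-s_0I)(S-s_1I)=0$. Because $S$ has zero diagonal and off-diagonal entries $\pm1$, one has $\tr S=0$ and $\tr S^{2}=n(n-1)$, and feeding these into the minimal equation of $S$ (and into its trace) gives the two standard identities $m_0s_0+m_1s_1=0$ and $s_0s_1=1-n$. A preliminary remark I will need is that $m_0,m_1\ge 2$: if, say, $m_0=1$, the two identities force $s_1\in\{1,-1\}$, and then $S$ has the same spectrum as $J-I$, so $[G]$ is trivial, a contradiction.

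The observation that drives the proof is that on the hyperplane $\mathbf{j}^{\perp}$ the matrix $J$ vanishes, so there $2A=-I-S$; hence every eigenvector of $S$ lying in $\mathbf{j}^{\perp}$ is an eigenvector of $A$, with $s_i$ turning into $-\tfrac12(1+s_i)=\theta_i$. For (i): if $G$ is $k$-regular then $S\mathbf{j}=(n-1-2k)\mathbf{j}$, so $\mathbf{j}$ spans an $S$-eigenspace, $\mathbf{j}^{\perp}$ is $S$-invariant, and the observation forces $A$ to have at most the three distinct eigenvalues $k,\theta_0,\theta_1$. A connected regular graph with at most three distinct eigenvalues is complete or strongly regular; a complete graph lies in a trivial two-graph, so $G$ is strongly regular. (If $G$ were regular and disconnected, one checks with the identities $m_0s_0+m_1s_1=0$, $s_0s_1=1-n$ that again $[G]$ would be trivial, so this case does not occur.)

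For (iii), assume $G$ is non-regular and put $\mathbf{d}=A\mathbf{j}$ and $m=|E(G)|$. First I will expand $S^{2}=(s_0+s_1)S-s_0s_1I$ using $S=J-I-2A$, $JA=\mathbf{j}\mathbf{d}^{\top}$, $AJ=\mathbf{d}\mathbf{j}^{\top}$, and collect terms so as to write $4A^{2}$ as an $\R$-linear combination of $I$, $J$, $A$, $\mathbf{j}\mathbf{d}^{\top}$, $\mathbf{d}\mathbf{j}^{\top}$. Applying this identity to $\mathbf{j}$ and using $A\mathbf{j}=\mathbf{d}$, $\mathbf{j}^{\top}\mathbf{j}=n$, $\mathbf{d}^{\top}\mathbf{j}=2m$ shows $A^{2}\mathbf{j}\in\langle\mathbf{j},\mathbf{d}\rangle$; since $G$ is not regular, $\mathbf{j}$ and $\mathbf{d}$ are independent, so by Theorem \ref{thm-hagos} $G$ has exactly two (hence distinct) main eigenvalues $\mu_0,\mu_1$, described by Proposition \ref{hagos-cor}. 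Moreover $W:=\langle\mathbf{j},\mathbf{d}\rangle$ is $A$-invariant, and, since $A^{2}\mathbf{j}\in W$, also $S$-invariant; hence $W^{\perp}\subseteq\mathbf{j}^{\perp}$ is $S$-invariant. Projecting $\mathbf{j}$ onto the two $S$-eigenspaces — both projections nonzero, again because $G$ is not regular — shows each $S$-eigenspace meets $W$ in a single dimension, so it meets $W^{\perp}$ in exactly $m_i-1$ dimensions; by the observation, $A|_{W^{\perp}}$ has eigenvalue $\theta_i$ with multiplicity $m_i-1$. Therefore the spectrum of $A$ consists of $\mu_0,\mu_1$ (simple, main) together with $[\theta_0]^{m_0-1},[\theta_1]^{m_1-1}$ (non-main). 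To finish (iii) I must rule out coincidences among these four numbers: since $\mu_0\neq\mu_1$ and $\theta_0\neq\theta_1$, a coincidence would force $\mu_i=\theta_j$, making $G$ a non-regular graph with only three distinct eigenvalues that sits in a regular two-graph; I would exclude this by a short case analysis (noting $\mu_0=\rho>0$ while one $\theta_j$ is negative, and that the remaining possibility forces enough structure to make the two-graph trivial). Finally, $\tr A=0$ and $\tr A^{2}=2m$ give $\mu_0+\mu_1=-(m_0-1)\theta_0-(m_1-1)\theta_1$ and $\mu_0^{2}+\mu_1^{2}=2m-(m_0-1)\theta_0^{2}-(m_1-1)\theta_1^{2}$, which determine $\mu_0\mu_1$ and hence $\{\mu_0,\mu_1\}$ in terms of $\theta_0,\theta_1,m_0,m_1,n$ and the number of edges.

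For (ii): a disconnected $G$ is in particular non-regular, so the analysis of (iii) applies, and the remaining task is to show that $G$ has exactly one isolated vertex $v$ and that $G-v$ is a connected strongly regular graph. Here I would follow the structural argument of \cite{VKX15}: exploit the two-eigenvalue property of $S$ evaluated on pairs of vertices in distinct components (the ``coherence'' count of the two-graph) to force all but one component to be a single vertex, and then apply (i) to the surviving component. I expect this reduction to the one-isolated-vertex configuration to be the main obstacle of the lemma; the rest is the essentially computational development sketched above.
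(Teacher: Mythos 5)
First, note that the paper itself contains no proof of this lemma: it is imported verbatim (as ``a slight modification'') from \cite{VKX15}, so there is no internal argument to compare against. Judged on its own terms, your approach is the standard one and, as far as it goes, correct: the identities $m_0s_0+m_1s_1=0$, $s_0s_1=1-n$, the observation that $S$-eigenvectors in $\mathbf{j}^{\perp}$ are $A$-eigenvectors with $s_i\mapsto\theta_i$, the expansion of $S^2=(s_0+s_1)S-s_0s_1I$ to get $A^2\mathbf{j}\in\langle\mathbf{j},\mathbf{d}\rangle$, the $A$- and $S$-invariance of $W=\langle\mathbf{j},\mathbf{d}\rangle$, the multiplicity count $m_i-1$ on $W^{\perp}$, and the recovery of $\mu_0,\mu_1$ from $\tr A=0$ and $\tr A^2=2m$ are all right. (One small repair: in your $m_0,m_1\ge2$ argument, ``same spectrum as $J-I$'' does not by itself give switching-triviality; you should note that a multiplicity-one eigenvalue forces $S+I$ to be a rank-one matrix $vv^{\top}$ with $v\in\{\pm1\}^n$, whence $G$ switches to the empty or complete graph.)

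There are, however, two genuine gaps. First, part (ii) is not proved: you name the right tool (evaluate $(S^2)_{xy}=(s_0+s_1)S_{xy}$ at pairs $x,y$ in distinct components) but do not carry it out. The computation is short and worth doing: for $x,y$ in different components one gets $d_x+d_y=\tfrac{1}{2}\bigl(n-2-(s_0+s_1)\bigr)$, which with three or more components forces regularity (and then triviality of $[G]$ via your part (i) analysis), and with exactly two components makes each component regular; the same identity applied to adjacent and non-adjacent pairs inside a component then shows each component is strongly regular with $\mu$-parameter $(a-b)/2$ resp. $(b-a)/2$, which is only consistent when one component has no non-adjacent pairs, and a short check eliminates everything except the isolated-vertex-plus-SRG configuration. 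Second, the claim that the four eigenvalues $\mu_0,\mu_1,\theta_0,\theta_1$ are pairwise distinct is asserted in the lemma but only gestured at in your proof (``a short case analysis''); your multiplicity bookkeeping by itself does not exclude $\mu_1=\theta_j$, i.e.\ a non-regular graph with only three distinct adjacency eigenvalues sitting in a non-trivial regular two-graph, and ruling this out requires an actual argument rather than a placeholder. Until those two pieces are written down, the proposal establishes (i) and most of (iii) but not the full statement.
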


Let $V=GF(2)^{2r}$ endowed with a non-degenerate symplectic bilinear form, where $r$
is a positive integer.
Let $G$ be the graph with vertex set $V$ and $u\sim v$
if $\langle u,v\rangle\neq 0$. The switching class $[G]$ is known as the symplectic
two-graph; and it is regular with the two Seidel eigenvalues $\pm2^{r-1}$. It is clear that
graph $G$ has $0$ as an isolated vertex. The other component of $G$ is known as the
symplectic graph $Sp(2r)$, which is strongly regular with parameters
$(2^{2r}-1,2^{2r-1},2^{2r-2},2^{2r-2})$.
In Table \ref{tab1}, which is slight modification of \cite[Table 5.1]{VKX15}, all non-regular graphs in the
switching classes of symplectic two-graph on
16 vertices are listed. Also, the possible values of $(\alpha,\beta)$ (first column)
for which it is 2-walk $(\alpha,\beta)$-linear graph,
main eigenvalues (second column), sequences
of valencies (third column), and how many times each of these occur (last column) are listed.
Note that the same sequence of valencies can be shared by non-isomorphic graphs.\\

Van Dam et al. \cite{VKX15} used a result of Vu \cite{V96} to show the following result.
\begin{thm}\cite[Theorem 5.1]{VKX15}\label{thm1a}
Let $t\geq3$, and $H$ be a graph on $n$ vertices with $t$ distinct valencies. Then there exists a connected
graph $G$ on at most $2^{n+2}$ vertices with four distinct eigenvalues and at least $t$ distinct valencies,
having $H$ as an induced subgraph.
\end{thm}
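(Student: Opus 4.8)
The plan is to put $H$ inside a bounded-size graph lying in a regular two-graph and then read the four eigenvalues off Lemma~\ref{thm1}. First I would apply Vu's embedding theorem \cite{V96}: as its title suggests, it embeds an arbitrary $n$-vertex graph as an induced subgraph of a graph with few (Seidel) eigenvalues of controlled size, so it produces a strongly regular graph $\Gamma$ on at most $2^{n+2}$ vertices which lies in (the switching class of) a regular two-graph $\mathcal{T}$ and has $H$ as an induced subgraph. Here $\mathcal{T}$ is automatically non-trivial: every graph in a trivial regular two-graph is switching-equivalent to a complete or an empty graph, hence is a disjoint union $K_a\sqcup K_b$ or a complete bipartite graph $K_{a,b}$, possibly with isolated vertices, and any induced subgraph of such a graph has at most two distinct valencies, whereas $H$ has $t\ge 3$.

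The only real step is the choice of switching set. Let $\sigma:=\{V(H),\,V(\Gamma)\setminus V(H)\}$ and put $G:=\Gamma^{\sigma}$. Seidel switching with respect to $\sigma$ alters only the edges between $V(H)$ and its complement, so the subgraph of $G$ induced on $V(H)$ is still $H$; thus $H$ is an induced subgraph of $G$, while $G$ belongs to $\mathcal{T}$ and has $|V(\Gamma)|\le 2^{n+2}$ vertices. Now I would count valencies: write $k$ for the valency of $\Gamma$ and, for $v\in V(H)$, let $\delta_v$ be the degree of $v$ in $H$; in $\Gamma$ the vertex $v$ has $\delta_v$ neighbours inside $V(H)$ and $k-\delta_v$ outside, so after switching
\[
d_G(v)=\delta_v+\bigl[(|V(\Gamma)|-n)-(k-\delta_v)\bigr]=2\delta_v+\bigl(|V(\Gamma)|-n-k\bigr).
\]
Hence $d_G$ restricted to $V(H)$ is an affine function of $\delta_v$ with nonzero slope, so it attains exactly as many values as $\delta_v$ does, namely $t$. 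In particular $G$ has at least $t\ge 3$ distinct valencies and is therefore non-regular.

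Finally I would settle connectivity and the eigenvalue count. If $G$ were disconnected, then being a graph in a non-trivial regular two-graph it would be the disjoint union of an isolated vertex and a strongly regular graph by Lemma~\ref{thm1}(ii); but such a graph has at most two distinct valencies, contradicting the previous paragraph. So $G$ is connected, and being non-regular it has exactly four distinct adjacency eigenvalues by Lemma~\ref{thm1}(iii). Combined with $|V(G)|\le 2^{n+2}$, the fact that $H$ is an induced subgraph of $G$, and that $G$ has at least $t$ distinct valencies, this is precisely the statement.

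The genuine obstacle is the first step — producing the bounded-size host $\Gamma$ in a regular two-graph — which is exactly what is borrowed from \cite{V96} and is the only non-elementary ingredient; everything after it is the short switching computation, whose one idea is to switch \emph{across} $V(H)$ so that the degree sequence of $H$ is dilated by a factor $2$ inside $G$ rather than washed out, and whose only subtlety is that a graph with at least three valencies in a non-trivial regular two-graph must be connected. While writing the details I would want to make the form of \cite{V96} explicit: whether it directly yields a \emph{regular} graph $\Gamma$ sitting in a regular two-graph, or whether a short additional step is needed (e.g.\ applying it to a suitable enlargement of $H$ and then passing to a component, as in the symplectic example of Section~\ref{sec3}), and how its explicit bound compares with $2^{n+2}$.
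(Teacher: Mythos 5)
The paper offers no proof of this theorem: it is imported verbatim from \cite[Theorem 5.1]{VKX15}, with only the remark that the graph $G$ there is taken inside the symplectic regular two-graph coming from $Sp(2r)$, $r=\lceil\frac{n+1}{2}\rceil$. Your reconstruction matches exactly that construction --- Vu's embedding of $H$ into $Sp(2r)$, followed by Seidel switching across $V(H)$, the affine degree count $d_G(v)=2\delta_v+\text{const}$, and Lemma~\ref{thm1}(ii)--(iii) for connectivity and the four eigenvalues --- and the argument is correct. The one detail to tighten is the one you already flag: Vu's theorem gives the strongly regular graph $Sp(2r)$ containing $H$, but the member of the regular two-graph is $Sp(2r)$ together with the isolated vertex $0$ (so the two-graph lives on $2^{2r}\le 2^{n+2}$ vertices); you should switch that $2^{2r}$-vertex graph rather than $Sp(2r)$ itself, which only changes the additive constant in your degree formula and nothing else.
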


 In \cite{VKX15}, the graph $G$ in the above theorem is taken such that the switching class $[G]$ is the symplectic regular two-graph coming from  $Sp(2r)$, with $r = \lceil\frac{n+1}{2}\rceil$. This shows that  the graph $G$ has exactly two main eigenvalues, and hence we have:
\begin{thm}\label{thm2}
Let $t\geq3$, and $H$ be a graph on $n$ vertices with $t$ distinct valencies. Then there exists a connected
graph $G$ on at most $2^{n+2}$ vertices with exactly two main eigenvalues and at least $t$ distinct valencies,
having $H$ as an induced subgraph.
\end{thm}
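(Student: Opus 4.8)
The plan is to read off the result from the combination of Theorem~\ref{thm1a} and Lemma~\ref{thm1}(iii), so that the ``proof'' is really a matter of bookkeeping. First I would apply Theorem~\ref{thm1a} to the given graph $H$ (on $n$ vertices, with $t\geq 3$ distinct valencies) to obtain a connected graph $G$ on at most $2^{n+2}$ vertices, with four distinct adjacency eigenvalues and at least $t$ distinct valencies, containing $H$ as an induced subgraph. The extra structural fact I need to extract from the Van Dam--Koolen--Xia construction is the one already highlighted in the excerpt: $G$ may be chosen so that its switching class $[G]$ is the symplectic regular two-graph arising from $Sp(2r)$ with $r=\lceil\frac{n+1}{2}\rceil$. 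Since $2r\leq n+2$, the symplectic graph $Sp(2r)$ together with the isolated vertex of $[G]$ has at most $2^{2r}\leq 2^{n+2}$ vertices, which is where the vertex bound comes from.

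Next I would check that $G$ satisfies the hypotheses of Lemma~\ref{thm1}. The two-graph $[G]$ is regular, with Seidel eigenvalues $\pm 2^{r-1}$; and it is non-trivial because $r=\lceil\frac{n+1}{2}\rceil\geq 2$ (a graph with $t\geq 3$ distinct valencies has at least four vertices), so $[G]$ contains neither a complete nor an empty graph. Moreover $G$ itself is non-regular, since it has at least $t\geq 3$ distinct valencies. Hence Lemma~\ref{thm1}(iii) applies to $G$: its adjacency spectrum consists of exactly four distinct eigenvalues $\mu_0,\mu_1,[\theta_0]^{m_0-1},[\theta_1]^{m_1-1}$, of which precisely $\mu_0$ and $\mu_1$ are main while $\theta_0$ and $\theta_1$ are non-main. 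Therefore $G$ has exactly two main eigenvalues. Combining this with the properties recorded in the previous paragraph yields the theorem.

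The only point requiring any care is the verification that the particular $G$ produced by Theorem~\ref{thm1a} genuinely lies in a non-trivial regular two-graph, so that Lemma~\ref{thm1}(iii) is applicable; once that is in place the conclusion is immediate and no computation with eigenvalues is needed. If one wished to avoid relying on the internal details of the construction in \cite{VKX15}, an alternative would be to re-run Vu's probabilistic argument directly inside a non-trivial regular two-graph of one's choosing, but invoking the construction as stated is cleaner.
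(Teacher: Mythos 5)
Your proposal is correct and follows essentially the same route as the paper: the paper likewise obtains $G$ from Theorem~\ref{thm1a} with $[G]$ the symplectic regular two-graph coming from $Sp(2r)$, $r=\lceil\frac{n+1}{2}\rceil$, and concludes via Lemma~\ref{thm1}(iii) that the non-regular $G$ has exactly two main eigenvalues. Your write-up is in fact more careful than the paper's one-sentence justification, since you explicitly check non-triviality of the two-graph, non-regularity of $G$, and the vertex bound $2^{2r}\leq 2^{n+2}$.
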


The following is an immediate consequence of Theorem \ref{thm2} and the definition of 2-walk $(\alpha,\beta)$-linear graph.
This gives an answer to the Question \ref{que}(ii) posed in Section \ref{sec1}.
\begin{cor}
For any positive integer $t\geq1$, there exists a \emph{2}-walk $(\alpha,\beta)$-linear graph $G$ having at least $t$ different
valencies.
\end{cor}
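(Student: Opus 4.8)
The plan is to derive this directly from Theorem~\ref{thm2}, so the first thing I would do is dispose of the small cases. For $t\in\{1,2\}$ nothing new is needed: a graph with at least three distinct valencies (which we will produce for $t=3$) automatically has at least $t$ of them, and in any event every $2$-walk $(\alpha,\beta)$-linear graph is non-regular and hence already has at least two distinct valencies. So I would assume $t\geq 3$ from now on.

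Next I would exhibit a graph $H$ realizing exactly $t$ distinct valencies, which is elementary: take $H$ to be the disjoint union of the stars $K_{1,1},K_{1,2},\dots,K_{1,t}$. Since $K_{1,i}$ contributes $i$ leaves (of valency $1$) together with one vertex of valency $i$, the union $H$ has vertices of valencies $1,2,\dots,t$ and no others; it is a finite graph on $n:=\sum_{i=1}^{t}(i+1)=\frac{t(t+1)}{2}+t$ vertices with precisely $t$ distinct valencies. (Any other standard family with $t$ prescribed distinct degrees would serve equally well.)

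Then I would feed this $H$ into Theorem~\ref{thm2}: it produces a connected graph $G$ (on at most $2^{n+2}$ vertices) with exactly two main eigenvalues and at least $t$ distinct valencies. It remains only to observe, as in the discussion following Theorem~\ref{thm-hagos}, that a connected graph with exactly two main eigenvalues is non-regular and satisfies $A\mathbf{d}=\alpha\mathbf{d}+\beta\mathbf{j}$ (with $\mathbf{d}=A\mathbf{j}$) for suitable $\alpha,\beta$, i.e.\ it is $2$-walk $(\alpha,\beta)$-linear; in particular $G$ is the graph we want. I do not expect any genuine obstacle: all of the substance is already packaged into Theorem~\ref{thm2} (and, through it, into Theorem~\ref{thm1a} and the symplectic two-graph construction), and the only things left to check are the (elementary) existence of a graph with $t$ prescribed distinct valencies and the bookkeeping identifying ``connected with exactly two main eigenvalues'' with ``$2$-walk linear''.
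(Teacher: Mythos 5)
Your proposal is correct and follows exactly the route the paper intends: the paper states the corollary as an immediate consequence of Theorem~\ref{thm2} together with the identification of ``connected with exactly two main eigenvalues'' with ``$2$-walk $(\alpha,\beta)$-linear'' via Theorem~\ref{thm-hagos}, and you simply make explicit the details it leaves implicit (a seed graph $H$ with $t$ prescribed valencies and the trivial cases $t\leq 2$).
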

As we already showed that for every integer $\delta\geq3$, there exists a connected equitable biregular $\delta$-harmonic
graph and for every $t\geq1$, there exists a connected 2-walk $(\alpha,\beta)$-linear graph with at least $t$ valencies,
we wonder wether the same is true for harmonic graphs.
Therefore, we would like to ask the following question:
\begin{ques}
For every $t$, does there always exist a harmonic graph with at least $t$ valencies?
\end{ques}
We think this is true. Note that a graph $G$, whose switching class $[G]$, is a non-trivial regular two-graph and has the same number of edges
as a graph $H$, in $[G]$, having an isolated vertex as one of its connected components, is always harmonic.
In the symplectic two-graph on 16 vertices, there are 8-harmonic graphs with 16 vertices and 4 distinct valencies,
see Table \ref{tab1}.

\section{Constructing infinite families}\label{sec4}
In this section, we show that if for fixed $\alpha$, $\beta$, there exists one connected
2-walk $(\alpha, \beta)$-linear graph, which is not a tree, then there exist infinitely many
pairwise non-isomorphic 2-walk $(\alpha, \beta)$ linear graphs.
As a consequence, we show that there exist
infinitely many non-isomorphic harmonic graphs.\\

Let $G$ and $H$ be two non-regular 2-walk $(\alpha,\beta)$-linear graphs.
Assume that $G$ and $H$ have respective edges $e=xy$ and $f=uv$, where $G-e$ and $H-f$ are connected, such
that $d_{x}=d_{u}$ and $d_{y}=d_{v}$.
Construct a graph $L$ from the disjoint union of $G$ and $H$ by replacing
the edges $xy$ and $uv$ by $xv$ and $yu$ respectively.
Then $L$ is connected, non-regular and 2-walk $(\alpha,\beta)$-linear (see F{\tiny IGURE} \ref{cons}).
We can start with $G\cong H$, $x=u$ and $y=v$ to obtain an infinite family of connected non-regular 2-walk
$(\alpha,\beta)$-linear graphs. Note that this infinite family has unbounded diameter as the maximal
valency does not change.
\begin{figure}[h!]
\centering
  \includegraphics[width=7cm]{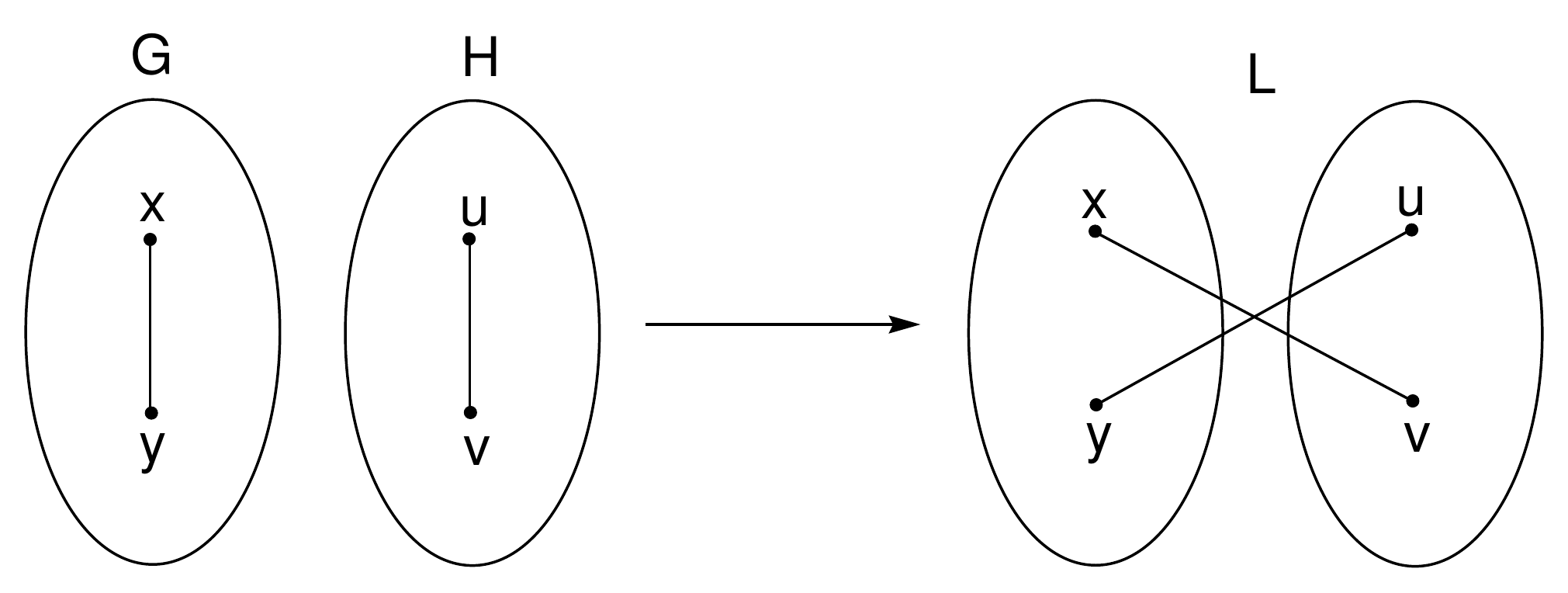}\\
  \caption{}\label{cons}
\end{figure}

Similar operation can be used to obtain an infinite family of harmonic graphs. If you start with
a harmonic graph and by using operation explained in F{\tiny IGURE} \ref{cons}, we can construct an infinite
family of harmonic graphs. It is worthy to remark that this construction can be easily modified to
construct many of these graphs.\\

The following result shows that if there exists a connected 2-walk $(\alpha, \beta)$-linear graph, which is not a tree,
then there exists an infinite family of (finite) connected 2-walk $(\alpha, \beta)$-linear graphs with unbounded diameter.
\begin{thm}\label{thm-sec3}
\begin{itemize}
  \item[\emph{(i)}] If $G$ is a connected graph, which is not a tree, with exactly two main eigenvalues, then it is always possible to construct
                   an infinite family of graphs with same two main eigenvalues and unbounded diameter.
  \item[\emph{(ii)}] Let $G$ be  a connected $\delta$-harmonic graph, which is not a tree, then there exists an infinite family of finite connected $\delta$-harmonic graphs with unbounded diameter.
\end{itemize}
\end{thm}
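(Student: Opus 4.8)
The plan is to iterate the edge-rewiring construction pictured in Figure \ref{cons} and to check that every iterate keeps the relevant algebraic condition while its order grows and its maximum valency does not.

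Since $G$ has exactly two main eigenvalues, Theorem \ref{thm-hagos} gives that $\mathbf{j},A\mathbf{j}$ are linearly independent (so $G$ is non-regular) and that $A\mathbf{d}=\alpha\mathbf{d}+\beta\mathbf{j}$ for suitable reals $\alpha,\beta$, where $\mathbf{d}=A\mathbf{j}$; that is, $G$ is $2$-walk $(\alpha,\beta)$-linear, with two main eigenvalues $\tfrac{\alpha\pm\sqrt{\alpha^{2}+4\beta}}{2}$ by Proposition \ref{hagos-cor}. As $G$ is not a tree it has a cycle, and any edge $e=xy$ on that cycle is not a bridge, so $G-e$ is connected. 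For $k\geq 2$ let $G_{1},\dots,G_{k}$ be disjoint copies of $G$, write $e_{i}=x_{i}y_{i}$ for the copy of $e$ in $G_{i}$, and let $L_{k}$ be obtained from $G_{1}\sqcup\cdots\sqcup G_{k}$ by deleting every $e_{i}$ and inserting the edges $x_{i}y_{i+1}$ for $i=1,\dots,k$ (indices modulo $k$); for $k=2$ this is precisely the construction of Figure \ref{cons} applied to $G$ and a second copy of $G$. Then $L_{k}$ is connected, because each $G_{i}-e_{i}$ is a connected copy of $G-e$ and the edge $x_{i}y_{i+1}$ joins $G_{i}-e_{i}$ to $G_{i+1}-e_{i+1}$; and the valency vector of $L_{k}$ is the $k$-fold repetition of that of $G$, since the only vertices whose neighbourhood changed are the $x_{i}$ (trading the neighbour $y_{i}$ for $y_{i+1}$) and the $y_{i}$ (trading $x_{i}$ for $x_{i-1}$), and in each case the two neighbours involved are copies of one vertex of $G$, hence of equal valency. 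In particular $L_{k}$ is non-regular.

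It remains to see that $A_{L_{k}}\mathbf{d}=\alpha\mathbf{d}+\beta\mathbf{j}$, where now $\mathbf{d}$ is the valency vector of $L_{k}$. A vertex $z$ of $G_{i}$ outside $\{x_{i},y_{i}\}$ has the same neighbours in $L_{k}$ as in its copy of $G$, so the $z$-coordinate of $A_{L_{k}}\mathbf{d}$ equals the corresponding coordinate of $A_{G}\mathbf{d}$, namely $\alpha d_{z}+\beta$; and for $z=x_{i}$, using $d_{y_{i}}=d_{y_{i+1}}$,
\[
(A_{L_{k}}\mathbf{d})_{x_{i}}=(A_{G}\mathbf{d})_{x}-d_{y_{i}}+d_{y_{i+1}}=\alpha d_{x}+\beta=\alpha d_{x_{i}}+\beta,
\]
and symmetrically for $z=y_{i}$ using $d_{x_{i}}=d_{x_{i-1}}$. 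Hence $L_{k}$ is $2$-walk $(\alpha,\beta)$-linear and, being non-regular, has exactly two main eigenvalues, which by Proposition \ref{hagos-cor} coincide with those of $G$. Since $|V(L_{k})|=k\,|V(G)|\to\infty$ while every $L_{k}$ has the same maximum valency $\Delta$ as $G$, and a connected graph on $N$ vertices of maximum valency $\Delta$ has at most $1+\Delta+\Delta(\Delta-1)+\cdots+\Delta(\Delta-1)^{D-1}$ vertices within distance $D$ of a fixed vertex, the diameters of the $L_{k}$ tend to infinity. This proves (i); it also proves (ii): applying the same construction to a connected $\delta$-harmonic $G$ which is not a tree, with $\alpha=\delta$ and $\beta=0$, the displayed computation (which uses only $A_{G}\mathbf{d}=\delta\mathbf{d}$, not non-regularity, and so also covers $G=C_{n}$) yields $A_{L_{k}}\mathbf{d}=\delta\mathbf{d}$, so each $L_{k}$ is a finite connected $\delta$-harmonic graph, and again $|V(L_{k})|\to\infty$ with bounded valency.

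The one step needing care is the identity $A_{L_{k}}\mathbf{d}=\alpha\mathbf{d}+\beta\mathbf{j}$: rewiring an edge of $G$ so that it joins two endpoints of the same respective valencies in another copy leaves every coordinate of $A\mathbf{d}$ unchanged, which is exactly why the valency conditions $d_{x}=d_{u}$, $d_{y}=d_{v}$ are built into the construction. The hypothesis that $G$ is not a tree is used only to produce a non-bridge edge, after which bounded maximum valency makes the unbounded-diameter conclusion automatic.
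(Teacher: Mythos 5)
Your proposal is correct and follows essentially the same route as the paper: the edge-rewiring of Figure \ref{cons} applied to copies of $G$ along a non-bridge edge, with the observation that matching endpoint valencies preserves the relation $A\mathbf{d}=\alpha\mathbf{d}+\beta\mathbf{j}$ and that bounded maximum valency forces the diameter to grow. The paper only sketches this (it describes the two-copy swap and asserts the iteration); your one-shot cyclic $k$-copy version and the explicit coordinate check of $A_{L_k}\mathbf{d}$ supply the details the paper leaves implicit.
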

The following result of Mu-huo and Liu \cite{ML07} is an easy consequence of Theorem \ref{thm-sec3}.
\begin{prp}\cite{ML07}
Let $\delta \geq 3$ be an integer.
Assume there exists a connected $\delta$-harmonic graph with valencies $d_1 > d_2 > \ldots > d_t$.
There exists infinitely many pair-wise non-isomorphic connected $\delta$-harmonic graphs with distinct valencies
$d_1 > d_2 > \ldots > d_t$ if and only if $\{ d_1, \ldots, d_t \} \neq \{ 1, \delta, \delta^2 -\delta +1\}$.
\end{prp}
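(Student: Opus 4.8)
The plan is to derive this as a corollary of Theorem \ref{thm-sec3}(ii), so the work splits into two directions matching the ``if and only if.''

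\textbf{The forward direction (the interesting one).} Suppose there is a connected $\delta$-harmonic graph $G$ with valency set $\{d_1,\ldots,d_t\}$ and $\{d_1,\ldots,d_t\} \neq \{1,\delta,\delta^2-\delta+1\}$. I would first argue that $G$ cannot be a tree: by Gr\"unewald's classification (stated in Section \ref{sec1}), the only harmonic trees are $K_1$, $K_2$ and the trees $T_\lambda$, and since $\delta \geq 3$ a $\delta$-harmonic tree must be $T_\delta$, whose valency set is exactly $\{1,\delta,\delta^2-\delta+1\}$ --- excluded by hypothesis. (One should also note $G$ is non-regular, since a connected $\delta$-harmonic graph with $\delta\geq 3$ that is regular would be $\delta$-regular, forcing $t=1$ and $d_1 = \delta$, and then $A\mathbf{d} = \delta^2\mathbf{j} \neq \delta\mathbf{d}$ unless... actually a regular graph is $\delta$-harmonic for $\delta$ equal to its valency, so $d_1 = \delta$; but this is fine and handled by the tree exclusion only if $t\geq 2$, so I'd simply remark the regular case has $t=1$, $\{d_1\}=\{\delta\}\neq\{1,\delta,\delta^2-\delta+1\}$, and the construction below still needs $G$ non-tree --- a single vertex blow-up argument or just invoking that $K_{\delta+1}$-type examples are not trees handles it). Then $G$ is a connected $\delta$-harmonic graph that is not a tree, so Theorem \ref{thm-sec3}(ii) produces an infinite family of pairwise non-isomorphic (they have unbounded diameter, hence unbounded order) connected $\delta$-harmonic graphs. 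Finally I must check this family preserves the valency \emph{set}: the construction in Figure \ref{cons} (take $G\cong H$, $x=u$, $y=v$, delete the two copies of edge $xy$ and reconnect crosswise) only deletes and re-adds edges incident to a pair of vertices with matching degrees $d_x, d_y$, so every vertex keeps its original degree; thus each graph in the family has exactly the valencies $d_1>d_2>\cdots>d_t$. Unbounded diameter gives infinitely many non-isomorphic graphs, completing this direction.

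\textbf{The converse direction.} Suppose $\{d_1,\ldots,d_t\} = \{1,\delta,\delta^2-\delta+1\}$. I would show there is (up to isomorphism) essentially only one such graph, namely $T_\delta$, so no infinite family with this precise valency set can exist. One route: a connected $\delta$-harmonic graph $G$ with these valencies has a unique vertex $v$ of valency $\delta^2-\delta+1$ (the harmonic equation $A\mathbf{d} = \delta\mathbf{d}$ read at a valency-$1$ vertex forces its unique neighbour to have valency $\delta$; read at a valency-$\delta$ vertex it forces the sum of its neighbours' valencies to be $\delta^2$, so a valency-$\delta$ vertex has all neighbours of valency $1$ except possibly... ) --- I would carry out exactly this local analysis: at a leaf $w$, $(A\mathbf{d})_w = d_{(\text{neighbour})} = \delta \cdot 1$, so the neighbour has valency $\delta$; at a valency-$\delta$ vertex $u$, $(A\mathbf{d})_u = \sum_{x\sim u} d_x = \delta^2$, and since $u$ has $\delta$ neighbours each of valency in $\{1,\delta,\delta^2-\delta+1\}$ summing to $\delta^2 = \delta\cdot\delta$, a short case check shows the only possibility (given a leaf is present) is one neighbour of valency $\delta^2-\delta+1$ and $\delta-1$ leaves; at the central vertex $v$, $(A\mathbf{d})_v = \delta(\delta^2-\delta+1)$, consistent with all $\delta^2-\delta+1$ neighbours having valency $\delta$. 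This pins down $G \cong T_\delta$, hence there is only one graph with this valency set and no infinite family exists.

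\textbf{Expected main obstacle.} The delicate point is the valency-set-preservation and connectivity bookkeeping in the forward construction: Theorem \ref{thm-sec3}(ii) as quoted guarantees an infinite family of connected $\delta$-harmonic graphs of unbounded diameter, but to match the proposition I must be sure the specialized construction (self-splicing $G$ with $G$) keeps \emph{exactly} the set $\{d_1,\ldots,d_t\}$ and keeps the graphs connected at every stage --- this needs the choice of edge $e=xy$ with $G-e$ connected, which exists because $G$ is not a tree (it has a cycle, so some edge is non-bridge). The converse direction's local degree analysis is routine but must be done carefully to rule out stray configurations (e.g. ensuring a valency-$\delta$ vertex cannot be adjacent to another valency-$\delta$ vertex once a leaf exists, and that the graph is forced to be a tree rather than containing a cycle), which is where most of the case-checking effort lies.
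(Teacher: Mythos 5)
Your proposal is correct and follows the route the paper intends: the paper offers no written proof of this proposition at all beyond the single remark that it ``is an easy consequence of Theorem \ref{thm-sec3}'', and your forward direction is exactly the fleshing-out of that remark --- exclude the tree case via Gr\"unewald's classification (the only non-trivial $\delta$-harmonic tree for $\delta\geq 3$ is $T_\delta$, whose valency set is precisely the excluded one), then self-splice along a non-bridge edge as in Figure \ref{cons}, observing that the splice preserves connectivity, all vertex degrees (hence the valency set and the harmonic equation $A\mathbf{d}=\delta\mathbf{d}$ read locally at $x,y,u,v$), and strictly increases the order, so the family is infinite. Your converse direction (the local degree analysis at a leaf, at a valency-$\delta$ vertex, and at the central vertex, pinning the graph down to $T_\delta$) is not in the paper at all --- the paper simply cites \cite{ML07} for the full equivalence --- so you are supplying genuinely omitted content, and the counting argument you sketch does close correctly: with $a+b+c=\delta$ neighbours of valencies $1,\delta,\delta^2-\delta+1$ summing to $\delta^2$, the identity $b(\delta-1)+c(\delta-1)^2=(\delta-a)(\delta-1)$ forces $c\in\{0,1\}$ and then the two configurations you name, after which connectivity forces $G\cong T_\delta$. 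The only quibble worth recording is a degenerate case shared with the statement itself: $K_1$ is vacuously $\delta$-harmonic for every $\delta$, is a tree, and has valency set $\{0\}\neq\{1,\delta,\delta^2-\delta+1\}$, yet admits no infinite family; your sentence ``a $\delta$-harmonic tree must be $T_\delta$'' silently ignores it, and one should either exclude $K_1$ explicitly or read the proposition as implicitly assuming at least one edge.
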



\begin{table}[h!]
\centering
\begin{tabular}{|c|c|c|c|}
  \hline
  $(\alpha,\beta)$ & main eigenvalues & valencies & number \\ \hline
  $(8,-9)$ & $4\pm\sqrt{7}$ & $3^{(1)},5^{(3)},7^{(12)}$ & 240 \\ \hline
  $(8,-9)$ & $4\pm\sqrt{7}$ & $5^{(6)},7^{(9)},9^{(1)}$ & 1120 \\ \hline
  $(8,-8)$ & $4\pm\sqrt{8}$ & $4^{(2)},6^{(8)},8^{(6)}$ & 2160 \\ \hline
  $(8,-5)$ & $4\pm\sqrt{11}$ & $3^{(1)},5^{(3)},7^{(8)},9^{(4)}$ & 2880 \\ \hline
  $(8,-5)$ & $4\pm\sqrt{11}$ & $5^{(6)},7^{(5)},9^{(5)}$ & 1152 \\ \hline
  $(8,-4)$ & $4\pm\sqrt{12}$ & $2^{(1)},6^{(6)},8^{(8)},10^{(1)}$ & 720 \\ \hline
  $(8,-4)$ & $4\pm\sqrt{12}$ & $4^{(4)},8^{(12)}$ & 240 \\ \hline
  $(8,-4)$ & $4\pm\sqrt{12}$ & $4^{(2)},6^{(6)},8^{(6)},10^{(2)}$ & 3360 \\ \hline
  $(8,-1)$ & $4\pm\sqrt{15}$ & $1^{(1)},7^{(9)},9^{(6)}$ & 240 \\ \hline
  $(8,-1)$ & $4\pm\sqrt{15}$ & $3^{(1)},5^{(2)},7^{(7)},9^{(5)},11^{(1)}$ & 2880 \\ \hline
  $(8,-1)$ & $4\pm\sqrt{15}$ & $5^{(5)},7^{(4)},9^{(6)},11^{(1)}$ & 1440 \\ \hline
  (8,0) & 8,~0 & $0^{(1)},8^{(15)}$ & 16 \\ \hline
  (8,0) & 8,~0 & $2^{(1)},6^{(4)},8^{(8)},10^{(3)}$ & 960 \\ \hline
  (8,0) & 8,~0 & $4^{(3)},8^{(12)},12^{(1)}$ & 240 \\ \hline
  (8,0) & 8,~0 & $4^{(2)},6^{(4)},8^{(6)},10^{(4)}$ & 2880 \\ \hline
  (8,0) & 8,~0 & $6^{(10)},10^{(6)}$ & 192 \\ \hline
  (8,3) & $4\pm\sqrt{19}$ & $3^{(1)},5^{(2)},7^{(3)},9^{(9)},11^{(1)}$ & 960 \\ \hline
  (8,3) & $4\pm\sqrt{19}$ & $3^{(1)},7^{(9)},9^{(3)},11^{(3)}$ & 320 \\ \hline
  (8,3) & $4\pm\sqrt{19}$ & $5^{(3)},7^{(6)},9^{(4)},11^{(3)}$ & 1920 \\ \hline
  (8,4) & $4\pm\sqrt{20}$ & $4^{(1)},6^{(4)},8^{(6)},10^{(4)},12^{(1)}$ & 2880 \\ \hline
  (8,4) & $4\pm\sqrt{20}$ & $6^{(8)},10^{(8)}$ & 180 \\ \hline
  (8,7) & $4\pm\sqrt{23}$ & $5^{(3)},7^{(6)},9^{(4)},11^{(3)}$ & 1920 \\ \hline
  (8,7) & $4\pm\sqrt{23}$ & $5^{(2)},7^{(4)},9^{(8)},11^{(1)},13^{(1)}$ & 720 \\ \hline
  (8,8) & $4\pm\sqrt{24}$ & $4^{(2)},8^{(6)},10^{(8)}$ & 240 \\ \hline
  (8,8) & $4\pm\sqrt{24}$ & $6^{(4)},8^{(6)},10^{(4)},12^{(2)}$ & 1440 \\ \hline
  (8,11) & $4\pm\sqrt{27}$ & $7^{(6)},9^{(6)},11^{(3)},13^{(1)}$ & 960 \\ \hline
  (8,12) & $4\pm\sqrt{28}$ & $6^{(2)},8^{(6)},10^{(6)},12^{(2)}$ & 480 \\ \hline
  (8,12) & $4\pm\sqrt{28}$ & $6^{(1)},8^{(8)},10^{(6)},14^{(1)}$ & 240 \\ \hline
  (8,12) & $4\pm\sqrt{28}$ & $8^{(12)},12^{(4)}$ & 80 \\ \hline
  (8,15) & $4\pm\sqrt{31}$ & $5^{(1)},9^{(10)},11^{(5)}$ & 96 \\ \hline
  (8,15) & $4\pm\sqrt{31}$ & $9^{(15)},15^{(1)}$ & 16 \\ \hline
\end{tabular}\label{tab1}
\vspace{0.4cm}
\caption{The non-regular graphs in the switching class of the symplectic two-graph on 16 vertices.}
\end{table}

\end{document}